\definecolor{intcolor}{HTML}{CA0020}
\definecolor{extcolor}{HTML}{0571B0}
\NewDocumentCommand{\interval}{sO{}mm}
 {
  \IfBooleanTF{#1}
   {
    \yannick_interval:NNnnn \left \right { } { #3 } { #4 }
   }
   {
    \yannick_interval:NNnnn \mathopen \mathclose { #2 } { #3 } { #4 }
   }
 }
\DeclareMathOperator{\Tr}{Tr}
\DeclareMathOperator{\E}{\mathbf{E}}
\DeclareMathOperator{\Prob}{\mathbf{P}}
\DeclareMathOperator{\Spec}{Spec}
\DeclareMathOperator{\erf}{erf}
\newcommand{\ii}{\mathrm{i}}
\renewcommand{\C}{\mathbf{C}}
\newcommand{\C}{\mathbf{C}}
\newcommand{\HC}{\mathbf{H}}
\newcommand{\vx}{\bm{x}}
\newcommand{\vy}{\bm{y}}
\newcommand{\Gin}{\mathrm{Gin}}
\newcommand{\wt}{\widetilde}
\newcommand{\wh}{\widehat}
\newcommand{\R}{\mathbf{R}}
\newcommand{\F}{\mathbf{F}}
\newcommand{\1}{\bm{1}}
\newcommand{\N}{\mathbf{N}}
\newcommand{\DD}{\mathbf{D}}
\newcommand{\cO}{\mathcal{O}}
\newcommand{\co}{{\scriptstyle\mathcal{O}}}
\newcommand{\diff}{\operatorname{d}\!{}}
\DeclarePairedDelimiter{\braket}{\langle}{\rangle}%
\DeclarePairedDelimiter{\abs}{\lvert}{\rvert}%
\DeclarePairedDelimiter{\norm}{\lVert}{\rVert}%
\providecommand\given{}
\newcommand\SetSymbol[1][]{\nonscript\:#1\vert\allowbreak\nonscript\:\mathopen{}}
\DeclarePairedDelimiterX{\tuple}[1](){\renewcommand\given{\SetSymbol[\delimsize]}#1}
\DeclarePairedDelimiterX{\set}[1]{\{}{\}}{\renewcommand\given{\SetSymbol[\delimsize]}#1}
\DeclarePairedDelimiterX{\Set}[1]\{\}{\renewcommand\given{\SetSymbol[\delimsize]}#1}
\DeclarePairedDelimiterXPP{\landauO}[1]{\cO}(){}{#1}
\DeclarePairedDelimiterXPP{\landauo}[1]{\co}(){}{#1}
\DeclarePairedDelimiterXPP{\landauok}[1]{\co_k}(){}{#1}
\DeclarePairedDelimiterXPP{\landauOprec}[1]{\cO_\prec}(){}{#1}
\DeclarePairedDelimiterXPP{\Exp}[1]{\E}[]{}{\renewcommand\given{\SetSymbol[\delimsize]}#1}
\DeclareFontFamily{U}{mathx}{\hyphenchar\font45}
\DeclareFontShape{U}{mathx}{m}{n}{
      <5> <6> <7> <8> <9> <10>
      <10.95> <12> <14.4> <17.28> <20.74> <24.88>
      mathx10
      }{}
\DeclareSymbolFont{mathx}{U}{mathx}{m}{n}
\DeclareMathAccent{\widecheck}{0}{mathx}{"71}
\date{\today}
\author{Giorgio Cipolloni\(^{\dagger\ddagger}\) \and L\'aszl\'o Erd\H{o}s\(^{\dagger}\)}
\address{IST Austria, Am Campus 1, A-3400 Klosterneuburg, Austria}
\author{Dominik Schr\"oder\(^{\dagger}\)}
\address{Institute for Theoretical Studies, ETH Zurich, Clausiusstr.\ 47, 8092 Zurich, Switzerland}
\email{dschroeder@ethz.ch}
\email{gcipollo@ist.ac.at} 
\email{lerdos@ist.ac.at} 
\thanks{\(^\dagger\)Partially supported by ERC Advanced Grant No.~338804}\thanks{\(^\ddagger\)This project has received funding from the European Union's Horizon 2020 research and innovation programme under the Marie Sk\l odowska-Curie Grant Agreement No. 665385.}  
\subjclass[2010]{60B20, 15B52} 
\keywords{Ginibre ensemble, Circular law, Girko's formula}
\title{Edge Universality for non-Hermitian Random Matrices}
\date{\today}
\newtheorem{theorem}{Theorem}
\newtheorem{assumption}{Assumption}
\newtheorem{lemma}{Lemma}
\newtheorem{proposition}{Proposition}
\newtheorem{remark}{Remark}
\begin{document}
\thispagestyle{empty}

\begin{abstract}
  We consider large non-Hermitian real or complex random matrices  \(X\) with 
  independent,  identically distributed centred entries. We
  prove that their local eigenvalue statistics near the spectral edge, 
  the unit circle, coincide with   those of the Ginibre ensemble,
  i.e.\ when the matrix elements of \(X\) are Gaussian. This result is the 
  non-Hermitian counterpart of the universality of the Tracy-Widom 
  distribution at the spectral edges of the Wigner ensemble. 
\end{abstract}

\maketitle

\section{Introduction}
Following Wigner's  motivation from physics,
most universality results  on the local eigenvalue statistics for large random matrices 
concern the Hermitian case. In particular, the celebrated Wigner-Dyson statistics 
in the bulk spectrum~\cite{MR0220494}, the Tracy-Widom statistics~\cite{MR1257246,MR1385083} at the spectral edge
and the Pearcey statistics~\cite{MR0020857, MR2207649} at
the possible cusps of the eigenvalue density profile all describe eigenvalue statistics
of a large Hermitian random matrix. In the last decade there has been a spectacular progress 
in verifying Wigner's original vision, formalized as the Wigner-Dyson-Mehta conjecture, for Hermitian ensembles with increasing generality, 
see e.g.~\cite{MR1810949, MR2375744, MR2662426, MR2784665, MR2810797, MR3687212, MR3800840,
MR3502606, MR2784665, MR3719056, MR3729630, MR3941370,
MR2726110, 1807.01559}
for the bulk,~\cite{MR2012268, MR1727234, MR2669449, MR3695802, MR3405746, MR3429490, MR3253704, 1712.03881, MR4089499} for the edge 
and more recently~\cite{MR4134946,MR3485343, MR4026551} at the cusps. 

Much less is known about the spectral universality for non-Hermitian models. In the simplest case of
the Ginibre ensemble, i.e.\ random matrices with i.i.d.\ standard Gaussian entries without any symmetry condition,
explicit formulas for all correlation functions 
have been computed first for the complex case~\cite{MR173726}
and later  for the more complicated  real case~\cite{MR2530159, MR2439268,  MR2185860} (with special cases  solved earlier~\cite{MR1121461, MR1437734, MR1231689}).
Beyond the explicitly computable Ginibre case only the method of \emph{four moment matching} by Tao and Vu
has been available. 
Their main universality result in~\cite{MR3306005} states that the local correlation functions
of the eigenvalues of a random matrix \(X\)  with i.i.d.\ matrix elements
coincide with those of the Ginibre ensemble as long as the first four moments
of the common distribution of the entries of \(X\) (almost) match 
the first four moments of the standard Gaussian. This result holds for
both real and complex cases as well as throughout the spectrum, including the edge regime.

In the current paper we prove the edge  universality for any \(n\times n\) random matrix \(X\) with centred  i.i.d.\ entries
in the edge regime, in particular we remove the four moment matching condition from~\cite{MR3306005}.
More precisely, under the normalization \(\E \abs{x_{ab}}^2= \frac{1}{n}\), the spectrum of \(X\) converges
to the unit disc with a uniform spectral density according to 
the \emph{circular law}~\cite{MR773436,MR1428519,MR2409368,MR866352,MR863545,MR3813992}.
The typical distance between nearest eigenvalues is of order \(n^{-1/2}\). We pick a reference point \(z\) on 
the boundary of the limiting spectrum, \(\abs{z}=1\),
and rescale  correlation functions by a factor of \(n^{-1/2}\)
to detect the correlation of  individual eigenvalues. We show that these rescaled correlation functions
converge to those of the Ginibre ensemble as \(n\to \infty\). This result is the non-Hermitian
analogue of the Tracy-Widom edge universality in the Hermitian case. 
A similar result is expected to hold in the bulk regime, i.e.\ for any reference point   \(\abs{z}<1\), but 
our method is currently restricted to the edge.

Investigating spectral statistics of non-Hermitian random matrices is considerably more challenging
than Hermitian ones. We give two fundamental reasons for this: the first one is already present
in the proof of the circular law on the global scale. The second one is specific to the 
most powerful existing method to  prove universality of eigenvalue fluctuations.

The first issue a general one; it is well known that
non-Hermitian, especially non-normal spectral analysis is difficult  because, unlike in the Hermitian case, the 
resolvent~\((X-z)^{-1}\)  of a non-normal matrix
is not effective to study eigenvalues near \(z\). Indeed, \((X-z)^{-1}\) can be very large
even if \(z\) is away from the spectrum, a fact that is closely related to the instability of the non-Hermitian eigenvalues under perturbations.
The only useful expression to 
grasp non-Hermitian eigenvalues is Girko's celebrated formula, see~\eqref{eq girko} later,
expressing linear statistics of eigenvalues of \(X\) in terms of the log-determinant
of the  symmetrized  matrix 
\begin{equation}
  \label{eq:linmatrix}
  H^z= \left( \begin{matrix}
    0 & X-z \\
    X^*-\overline{z} & 0
  \end{matrix} \right).
\end{equation}
Girko's formula is much more subtle and harder to analyse than the analogous 
expression for the Hermitian case involving the boundary value of the
resolvent on the real line. In particular, it requires a good lower bound on the
smallest singular value of \(X-z\), a notorious  difficulty behind 
the proof of the  circular law.  Furthermore, any conceivable universality proof would  rely on a
local version of the circular law as an a priori control. Local  laws on optimal scale assert 
that the  eigenvalue density on a scale \(n^{-1/2+\epsilon}\)
is deterministic with high probability, i.e.\ it is a law of large number type result and is not sufficiently
refined to detect correlations of  individual eigenvalues. The 
proof of the local circular law requires a careful analysis of \(H^z\) that has an additional structural 
instability due to its block symmetry. A specific estimate, tailored to Girko's formula, on the trace of the 
resolvent of \((H^z)^2\)  was the main ingredient behind the proof of the local circular law on optimal scale~\cite{MR3230002, MR3230004, MR3278919}, see also~\cite{MR3306005} under three moment matching condition.
Very recently the optimal local circular law was even proven
for ensembles with  inhomogeneous variance profiles in the bulk~\cite{MR3770875} and at the
edge~\cite{1907.13631}, the latter result also gives an optimal control on the spectral radius. An optimal local law for \(H^z\) in the edge regime previously had not been available, even in the i.i.d.\ case.

The second major obstacle to prove universality of  fluctuations of non-Hermitian eigenvalues is the lack of a good 
analogue of the Dyson Brownian motion. The essential ingredient behind the strongest universality results 
in the Hermitian case  is  the Dyson  Brownian motion (DBM)~\cite{MR148397},
a system of coupled stochastic differential equations  (SDE) that the eigenvalues of a natural stochastic
flow of random matrices satisfy, see~\cite{MR3699468} for a pedagogical summary. 
The corresponding SDE  in the non-Hermitian case involves 
not only eigenvalues but overlaps of eigenvectors as well, see e.g.~\cite[Appendix A]{MR4095019}.
Since overlaps themselves have strong correlation whose proofs are highly nontrivial 
even in the Ginibre case~\cite{MR3851824, MR4095019}, the 
analysis of  this SDE is currently beyond reach.

Our proof of the edge universality circumvents DBM and it has two key ingredients. 
The first  main input is an optimal local law
for the resolvent of \(H^z\) both in \emph{isotropic} and \emph{averaged} sense, see~\eqref{eq local law} later, that
allows for a concise and transparent  comparison of the joint distribution of several resolvents
of \(H^z\) with their Gaussian counterparts by following their evolution 
under the natural Ornstein-Uhlenbeck (OU). We are able to control this  flow for a long time,  similarly
to an earlier proof of the Tracy-Widom law at the spectral edge of a Hermitian ensemble~\cite{MR3582818}. Note that the density of eigenvalues
of  \(H^z\) develops a cusp as \(\abs{z}\) passes through 1, the spectral radius of \(X\).  
The optimal local law for very general Hermitian ensembles in the cusp regime
has recently been proven~\cite{MR4134946}, strengthening the non-optimal result
in~\cite{MR3719056}. This optimality was essential in the proof of the universality of the
Pearcey statistics for both the complex Hermitian~\cite{MR4134946} and real symmetric~\cite{MR4026551} 
matrices with a cusp in their density of states. The matrix \(H^z\), however,
does not satisfy the  key \emph{flatness} condition required~\cite{MR4134946} due its large zero blocks.
A very delicate analysis of the underlying matrix Dyson equation was necessary
to overcome the flatness condition and prove the optimal local law for \(H^z\) in~\cite{MR3770875,1907.13631}.

Our  second key input is a lower tail estimate on the lowest singular value of \(X-z\) when \(\abs{z}\approx 1\).
A very mild regularity assumption on the distribution of the matrix elements of \(X\), see~\eqref{eq:smbound} later, guarantees that there
is no singular value below \(n^{-100}\), say. Cruder bounds  guarantee that there cannot be  more than \(n^\epsilon\) singular values 
below \(n^{-3/4}\); note that this natural scaling reflects the cusp at zero in the density of states of \(H^z\).
Such information on the possible singular values in the regime \([n^{-100}, n^{-3/4}]\) is sufficient
for the optimal local law since it  is insensitive to \(n^\epsilon\)-eigenvalues, but for universality every eigenvalue must be
accounted for. We therefore need a stronger lower tail bound on the lowest eigenvalue \(\lambda_1\) of 
\((X-z)(X-z)^*\). With supersymmetric methods  we recently proved~\cite{1908.01653} a precise bound   of the form
\begin{equation}\label{our lambda}
  \Prob\Big(\lambda_1 \big( (X-z)(X-z)^*\big)  \le \frac{x}{n^{3 /2}}\Big)\lesssim \begin{cases}
    x+ \sqrt{x} e^{- n(\Im z)^2},& X\sim \Gin(\R)\\
    x,& X\sim\Gin(\C),
  \end{cases} 
\end{equation}  
modulo logarithmic corrections, for the Ginibre ensemble  whenever \(\abs{z}=1 + \landauO{n^{-1/2}}\). 
Most importantly,~\eqref{our lambda} controls \(\lambda_1\)  on the optimal \(n^{-3/2}\) scale
and thus excluding singular values in the intermediate regime \([n^{-100}, n^{-3/4-\epsilon}]\)
that was inaccessible with other methods. We extend this control to \(X\) with i.i.d.\ entries
from the Ginibre ensemble with Green function comparison argument
using again the  optimal local law for \(H^z\).

\subsection*{Notations and conventions}
We introduce some notations we use throughout the paper. We write \(\HC\) for the upper half-plane \(\HC:=\set{z\in\C\given\Im z>0}\), and for any \(z\in\mathbb{C}\) we use the notation \(\diff z:= 2^{-1} \ii(\diff z\wedge \diff \overline{z})\) for the two dimensional volume form on \(\mathbb{C}\). For any \(2n\times 2n\) matrix \(A\) we use the notation \(\langle A\rangle:= (2n)^{-1}\Tr  A\) to denote the normalized trace of \(A\). For positive quantities \(f,g\) we write \(f\lesssim g\) and \(f\sim g\) if \(f \le C g\) or \(c g\le f\le Cg\), respectively, for some constants \(c,C>0\) which depends only on the constants appearing in~\eqref{eq:boundmom}. We denote vectors by bold-faced lower case Roman letters \({\bm x}, {\bm y}\in\C^k\), for some \(k\in\N\). Vector and matrix norms, \(\norm{\vx}\) and \(\norm{A}\), indicate the usual Euclidean norm and the 
corresponding induced matrix norm. Moreover, for a vector \({\bm x}\in\C^k\), we use the notation \(\diff {\bm x}:= \diff x_1\dots \diff x_k\).

We will use the concept of ``with very high probability'' meaning that for any fixed \(D>0\) the probability of the event is bigger than \(1-n^{-D}\) if \(n\ge n_0(D)\). Moreover, we use the convention that \(\xi>0\) denotes an arbitrary small constant.

We use the convention that quantities without tilde refer to a general matrix with i.i.d.\ entries, whilst any quantity with tilde refers to the Ginibre ensemble, e.g.\ we use \(X\), \(\{\sigma_i\}_{i=1}^n\) to denote a non-Hermitian matrix with i.i.d.\ entries and its eigenvalues, respectively, and \(\widetilde{X}\), \(\{\widetilde{\sigma}_i\}_{i=1}^n\) to denote their Ginibre counterparts.

\section{Model and main results}
We consider real or complex i.i.d.\ matrices \(X\), i.e.\ matrices whose entries are independent and identically distributed as \(x_{ab} \stackrel{d}{=} n^{-1/2}\chi\) for a random variable \(\chi\). We formulate two assumptions on the random variable \(\chi\):
\begin{assumption}\label{as:bm}
  In the real case we assume that \(\E \chi=0\) and \(\E\chi^2=1\), while in the complex case we assume \(\E\chi=\E\chi^2=0\) and \(\E\abs{\chi}^2=1\). In addition, we assume the existence of high moments, i.e.\ that there exist constants \(C_p>0\) for each \(p\in\N\), such that
  \begin{equation}
    \label{eq:boundmom}
    \E \abs{\chi}^p\le C_p.
  \end{equation}
\end{assumption}

\begin{assumption}\label{as:smd}
  There exist \(\alpha,\beta>0\) such that the probability density \(g\colon \F\to[0,\infty)\) of the random variable \(\chi\) satisfies
  \begin{equation}
    \label{eq:smbound}
    g \in L^{1+\alpha}(\F),\qquad \lVert g\rVert_{1+\alpha}\le n^\beta,
  \end{equation}
  where \(\F=\R,\C\) in the real and complex case, respectively. 
\end{assumption}
\begin{remark}\label{remm:tv}
  We remark that we use Assumption~\ref{as:smd} only to 
  control the probability of a very small singular value of \(X-z\).
  Alternatively, one may use 
  the statement
  \begin{equation}
    \label{eq:tv}
    \Prob(  \Spec(H^z) \cap [-n^{-l}, n^{-l}] =\emptyset)\le C_l n^{-l/2},
  \end{equation}
  for any \(l\ge 1\), uniformly in \(\abs{z}\le 2\), 
  that follows directly from~\cite[Theorem 3.2]{MR2684367} without Assumption~\ref{as:smd}. 
  Using~\eqref{eq:tv} makes Assumption~\ref{as:smd} superfluous in the entire paper, albeit at the expense of 
  a quite sophisticated proof.
\end{remark}

We denote the eigenvalues of \(X\) by \(\sigma_1,\dots,\sigma_n\in\C\), and define the \emph{\(k\)-point correlation function \(p_k^{(n)}\)} of \(X\) implicitly such that
\begin{equation}
  \label{eq:kpointfunc}
  \begin{split}
    &\int_{\C  ^k}F(z_1,\dots, z_k) p_k^{(n)}(z_1,\dots, z_k)\, \diff z_1\dots \diff z_k\\
    &\qquad=\binom{n}{k}^{-1}\E  \sum_{i_1,\dots, i_k} F(\sigma_{i_1}, \dots, \sigma_{i_k}),
  \end{split}
\end{equation}
for any smooth compactly supported test function \(F\colon\C^k \to \C\), with \(i_j\in \{1,\dots, n\}\) for \(j\in\{1,\dots,k\}\) all distinct. For the important special case when \(\chi\) follows a standard real or complex Gaussian distribution, we denote the \(k\)-point function of the \emph{Ginibre matrix} \(X\) by \(p_k^{(n,\Gin(\F))}\) for \(\F=\R,\C\). The \emph{circular law} implies that the \(1\)-point function converges 
\[\lim_{n\to\infty }p_1^{(n)}(z) = \frac{1}{\pi} \1(z\in\DD) = \frac{1}{\pi}\1(\abs{z}\le 1) \] 
to the uniform distribution on the unit disk. On the scale \(n^{-1/2}\) of individual eigenvalues the scaling limit of the \(k\)-point function has been explicitly computed in the case of complex and real Ginibre matrices, \(X\sim\Gin(\R),\Gin(\C)\), i.e.\ for any fixed \(z_1,\dots,z_k,w_1,\dots,w_k\in\C\) there exist scaling limits \(p_{z_1,\dots,z_k}^{(\infty)}=p_{z_1,\dots,z_k}^{(\infty,\Gin(\F))}\) for \(\F=\R,\C\) such that
\begin{equation}\label{ginibre kpt limit} 
  \lim_{n\to\infty} p_k^{(n,\Gin(\F))}\Bigl(z_1+\frac{w_1}{n^{1/2}},\dots,z_k+\frac{w_k}{n^{1/2}}\Bigr) = p_{z_1,\dots,z_k}^{(\infty,\Gin(\F))}(w_1,\dots,w_k).
\end{equation}
\begin{remark}
  The \(k\)-point correlation function \(p_{z_1,\dots, z_k}^{(\infty,\Gin(\F))}\) of the Ginibre ensemble in both the complex and real cases \(\F=\C,\R\) is explicitly known; see~\cite{MR173726} and~\cite{MR0220494} for the complex case, and~\cite{MR2530159,MR1437734,17930739} for the real case, where the appearance of \(\sim n^{1/2}\) real eigenvalues causes a singularity in the density. In the complex case \(p_{z_1,\dots, z_k}^{(\infty,\Gin(\C))}\) is determinantal, i.e.\ for any \(w_1,\dots, w_k\in\C\) it holds
  \[
  p_{z_1,\dots, z_k}^{(\infty,\Gin(\C))}(w_1,\dots,w_k)=\det \left(K_{z_i,z_j}^{(\infty,\Gin(\C))}(w_i,w_j)\right)_{1\le i,j\le k}
  \]
  where for any complex numbers \(z_1\), \(z_2\), \(w_1\), \(w_2\) the kernel \(K_{z_1,z_2}^{(\infty,\Gin(\C))}(w_1,w_2)\) is defined by
  \begin{enumerate}[label=(\roman*)]
    \item For \(z_1\ne z_2\), \(K_{z_1,z_2}^{(\infty,\Gin(\C))}(w_1,w_2)=0\).
    \item For \(z_1=z_2\) and \(\abs{z_1}>1\), \(K_{z_1,z_2}^{(\infty,\Gin(\C))}(w_1,w_2)=0\).
    \item For \(z_1=z_2\) and \(\abs{z_1}<1\),
    \[
    K_{z_1,z_2}^{(\infty,\Gin(\C))}(w_1,w_2)=\frac{1}{\pi}e^{-\frac{\abs{w_1}^2}{2}-\frac{\abs{w_2}^2}{2}+w_1\overline{w_2}}.
    \]
    \item For \(z_1=z_2\) and \(\abs{z_1}=1\),
    \[
    K_{z_1,z_2}^{(\infty,\Gin(\C))}(w_1,w_2)=\frac{1}{2\pi}\left[ 1+ \erf\left( -\sqrt{2}(z_1\overline{w_2}+w_1\overline{z_2})\right) \right] e^{-\frac{\abs{w_1}^2}{2}-\frac{\abs{w_2}^2}{2}+w_1\overline{w_2}},
    \]
    where
    \[
    \erf(z):= \frac{2}{\sqrt{\pi}}\int_{\gamma_z} e^{-t^2}\, \diff t,
    \]
    for any \(z\in\C  \), with \(\gamma_z\) any contour from \(0\) to \(z\).
  \end{enumerate}
  For the corresponding much more involved formulas for \(p_k^{(\infty,\Gin(\R))}\) we refer the reader to~\cite{MR2530159}. 
\end{remark}

Our main result is the universality of \(p_{z_1,\dots,z_k}^{(\infty,\Gin(\R,\C))}\) at the edge. In particular we show, that the edge-scaling limit of \(p_k^{(n)}\) agrees with the known scaling limit of the corresponding real or complex Ginibre ensemble.

\begin{theorem}[Edge universality]\label{theo:edgeuniv}
  Let \(X\) be an i.i.d.\ \(n\times n\) matrix, whose entries satisfy Assumption~\ref{as:bm}. 
  Then, for any fixed integer \(k\ge 1\), and complex spectral parameters \(z_1, \dots, z_k\) such that \(\abs{z_j}^2=1\), \(j=1,\dots,k\), and for any compactly supported smooth function \(F\colon\C^k \to \C\), we have the bound
  \begin{equation}
    \label{eq:univ}
    \int_{\C^k} F({\bf w})\left[  p_k^{(n)} \left({\bf z}+\frac{{\bf w}}{\sqrt{n}}\right)-p_{{\bf z}}^{(\infty,\Gin(\F))}({\bf w}) \right]\, \diff {\bf w}=\landauO{ n^{-c}},
  \end{equation}
  where the constant in \(\landauO{\cdot}\) may depend on \(k\) and the \(C^{2k+1}\) norm of \(F\), and \(c>0\) is a small constant depending on \(k\).
\end{theorem}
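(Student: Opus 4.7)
The plan is to apply Girko's Hermitization formula to rewrite the left hand side of~\eqref{eq:univ} in terms of traces of resolvents \(G^\zeta(\ii\eta):=(H^\zeta-\ii\eta)^{-1}\) of the symmetrized matrices from~\eqref{eq:linmatrix}, and then to compare these resolvent traces between \(X\) and its Ginibre counterpart \(\wt X\) via an Ornstein--Uhlenbeck flow. By standard inclusion--exclusion and polarization it suffices to consider product test functions \(F(\bm w)=\prod_{j=1}^k f_j(w_j)\) and to compare the expectations
\[
\E\prod_{j=1}^k \frac{1}{n}\sum_{i} f_j\bigl(\sqrt n(\sigma_i-z_j)\bigr)
\]
between \(X\) and \(\wt X\). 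Girko's identity expresses each factor as \(\frac{1}{4\pi n}\int_\C \Delta_w f_j(w)\log\abs{\det H^{z_j+w/\sqrt n}}\,\diff w\), and the log-determinant is in turn written via a resolvent identity as \(\int_0^T \Im\braket{G^{z_j+w/\sqrt n}(\ii\eta)}\,\diff\eta\) plus deterministic boundary and outlier contributions at large \(\eta\).

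The \(\eta\)-integration is split into three regimes. For mesoscopic \(\eta\ge n^{-1+\delta}\) the optimal edge local law for \(H^z\) of~\cite{1907.13631}, in both its isotropic and averaged forms, controls \(\braket{G^\zeta(\ii\eta)}\) with optimal precision. One then runs the Ornstein--Uhlenbeck flow \(\diff X_t=-\tfrac12 X_t\,\diff t+\diff B_t\) connecting a suitably chosen initial matrix to \(\wt X\), and differentiates \(\E\Phi(X_t)\) in \(t\), where \(\Phi(X)\) is the \(k\)-fold product of Girko integrals. The Itô second-order contribution cancels against its Gaussian counterpart, and a cumulant expansion using Assumption~\ref{as:bm} reduces the remainder to averages of products of resolvent entries; the isotropic and averaged local laws then show that this remainder is \(n^{-c}\)-small uniformly along the flow, analogously to the Hermitian edge comparison strategy of~\cite{MR3582818} but now adapted to the cusp that the density of states of \(H^z\) develops when \(\abs{z}=1\).

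For very small \(\eta\) the log-determinant is sensitive to individual small singular values of \(X-\zeta\). The window \(\eta\le n^{-100}\) is excluded with overwhelming probability by Assumption~\ref{as:smd} (or by~\eqref{eq:tv}), leaving the intermediate range \([n^{-100},n^{-1+\delta}]\). In this window the Ginibre tail bound~\eqref{our lambda} is transferred to the i.i.d.\ setting by a further Green function comparison argument based again on the optimal local law for \(H^z\); together with the \(n^\epsilon\)-rigidity of singular values provided by that local law, this bounds the intermediate-\(\eta\) contribution to each Girko integral by \(n^{-c}\) for both \(X\) and \(\wt X\), so that the full comparison reduces to the mesoscopic regime already treated.

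The hardest step is the Ornstein--Uhlenbeck comparison. The integrand involves \(k\) resolvent traces at spectral parameters \(\ii\eta_j\) with \(\eta_j\) possibly as small as \(n^{-1+\delta}\), precisely the scale on which the cusp of the density of states of \(H^z\) forms as \(\abs{z}\) crosses the spectral radius. Each cumulant expansion term must therefore be estimated with sharp \(\eta\)-dependence so as to remain integrable both in the \(\eta_j\)-variables of Girko's formula and along the long \(t\)-interval needed to reach the Gaussian invariant measure of the flow. This is exactly where the optimal local law for \(H^z\)---available despite the failure of the standard flatness condition for this block-structured matrix---is indispensable: any non-optimal bound would produce a logarithmic divergence in at least one of these integrations and would be insufficient to conclude universality.
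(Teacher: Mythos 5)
Your proposal identifies the correct three-step architecture used in the paper --- Girko's Hermitization, a short-range / long-range split in the \(\eta\)-integral, and an Ornstein--Uhlenbeck Green function comparison --- and you correctly single out the optimal local law for \(H^z\) and the Ginibre smallest-singular-value tail bound transferred by a further Green function comparison as the two main analytical inputs. However, the explicit cutoff scale \(\eta_0\sim n^{-1+\delta}\) that you put between the ``mesoscopic'' OU regime and the ``intermediate'' small-singular-value regime is wrong, and the argument does not close at that scale.

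At the edge \(\abs{z}=1\) the density of states of \(H^z\) has a \emph{cusp} at zero (\(\Im\widehat m(\ii\eta)\sim\eta^{1/3}\) rather than the square-root behaviour of the bulk), so the fluctuation scale of the smallest singular value is \(n^{-3/4}\), not \(n^{-1}\); the paper's split is accordingly \(\eta_0=n^{-3/4-\delta}\). This is not cosmetic. In the OU comparison the third-order cumulant term produces an error of size \(n^\xi/(n^{5/2}\eta_0^3)\) per unit time (Lemma~\ref{prop:impnewflow}): with \(\eta_0=n^{-3/4-\delta}\) this is \(n^{\xi-1/4+3\delta}\ll1\), whereas with \(\eta_0=n^{-1+\delta}\) it is \(n^{\xi+1/2-3\delta}\gg1\) for small \(\delta\). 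So the long-time flow comparison simply fails on \([n^{-1+\delta},T]\). Symmetrically, the Green function comparison that transports the Ginibre estimate~\eqref{our lambda} to general entries (Proposition~\ref{prop:cuspuniv}) carries the same \(n^\xi/(n^{5/2}\eta^3)\) error, so its output is only useful in a narrow window \(n^{-5/6}\lesssim\eta\lesssim n^{-3/4}\); it cannot control the contribution from \([n^{-100},n^{-1+\delta}]\) as you propose. Finally, you attribute part of the intermediate-range estimate to the ``\(n^\epsilon\)-rigidity'' coming from the local law, but --- as emphasized in the paper --- that \(n^\epsilon\)-level bound is precisely what is \emph{not} sufficient for universality; the whole point of Proposition~\ref{prop:lt} is to replace it by an \(o(1)\) bound in expectation on the number of singular values below \(n^{\delta/2}\eta_0\). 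Fixing your proposal amounts to re-centring the split at the cusp scale \(n^{-3/4\pm\delta}\), distinguishing the high-probability bound on the short-range contribution from the improved bound in expectation (the analogue of Lemma~\ref{lem:averbound}), and observing that the comparison argument for Proposition~\ref{prop:lt} closes only because its error term \(n^{-5/2}\eta^{-3}\) is small precisely in the same window near \(n^{-3/4}\).
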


\subsection{Proof strategy} For the proof of Theorem~\ref{theo:edgeuniv} it is essential to study the linearized \(2n\times 2n\) matrix \(H^z\) defined in~\eqref{eq:linmatrix} with eigenvalues \(\lambda_1^z\le\dots\le\lambda_{2n}^z\) and resolvent \(G(w)=G^z(w):= (H^z-w)^{-1}\). We note that the block structure of \(H^z\) induces a spectrum symmetric around \(0\), i.e.\ \(\lambda_i^z=-\lambda_{2n-i+1}^z\) for \(i=1,\dots,n\). The resolvent becomes approximately deterministic as \(n\to\infty\) and its limit can be found by solving the simple scalar equation 
\begin{equation}
  \label{eq:defhatm}
  -\frac{1}{\widehat{m}^z}= w + \widehat{m}^z- \frac{\abs{z}^2}{w+\widehat{m}^z},\quad \wh m^z(w)\in\HC,\quad w\in\HC,
\end{equation}
which is a special case of the \emph{matrix Dyson equation (MDE)}, see e.g.~\cite{MR3916109}. In the following we may often omit the \(z\)-dependence of \(\widehat{m}^z\), \(G^z(w)\), \(\dots\), in the notation. We note that on the imaginary axis we have \(\wh m(\ii\eta)=\ii\Im \wh m(\ii\eta)\), and in the edge regime \(\abs[\big]{1-\abs{z}^2}\lesssim n^{-1/2}\) we have the scaling~\cite[Lemma~3.3]{1907.13631}
\begin{equation}\label{m scaling} 
  \Im \wh m(\ii\eta) \sim \left.\begin{cases}
    \abs[\big]{1-\abs{z}^2}^{1/2} + \eta^{1/3}, & \abs{z}\le 1,\\
    \frac{\eta}{\abs{1-\abs{z}^2}+\eta^{2/3}}, & \abs{z}>1 
  \end{cases}\right\}  \lesssim n^{-1/4}+\eta^{1/3}.
\end{equation}
For \(\eta>0\) we define 
\begin{equation}\label{M matrix}
  u=u^z(\ii\eta):= \frac{\Im \wh m(\ii\eta)}{\eta+ \Im \wh m(\ii\eta)},\qquad M=M^z(\ii\eta):=  \begin{pmatrix}\wh m(\ii\eta)& -z u(\ii\eta)\\ -\overline{z} u(\ii\eta) & \wh m(\ii\eta) \end{pmatrix}, 
\end{equation}
where \(M\) should be understood as a \(2n\times 2n\) whose four \(n\times n\) blocks are all multiples of the identity matrix, and we note that~\cite[Eq.~(3.62)]{1907.13631}
\begin{equation}\label{M Mp bounds} u(\ii\eta)\lesssim 1,\quad \norm{M(\ii\eta)}\lesssim 1,\quad \norm{M'(\ii\eta)}\lesssim \frac{1}{\eta^{2/3}}  \end{equation} 

Throughout the proof we shall make use of the following optimal local law which is a direct consequence of~\cite[Theorem 5.2]{1907.13631} (extending~\cite[Theorem 5.2]{MR3770875} to the edge regime). Compared to~\cite{1907.13631} we require the local law simultaneously in all the spectral parameters \(z,\eta\) and for \(\eta\) slightly below the fluctuation scale \(n^{-3/4}\). We defer the proofs for both extensions to Appendix~\ref{app:aux}.
\begin{proposition}[Local law for \(H^z\)]\label{prop:local law}
  Let \(X\) be an i.i.d.\ \(n\times n\) matrix, whose entries satisfy Assumption~\ref{as:bm} and~\ref{as:smd}, and let \(H^z\) be as in~\eqref{eq:linmatrix}. Then for any deterministic vectors \(\vx,\vy\) and matrix
  \(R\) and any \(\xi>0\) the following holds true with very high probability: Simultaneously for any \(z\) with for \(\abs{1-\abs{z}}\lesssim n^{-1/2}\) and all \(\eta\) such that \(n^{-1}\le\eta\le n^{100}\) we have the bounds 
  \begin{equation}\label{eq local law}
    \begin{split}   
      \abs{\braket{\vx,(G^z(\ii\eta)-M^z(\ii\eta))\vy}} &\le n^\xi \norm{\vx}\norm{\vy}\Bigl(\frac{1}{n^{1/2}\eta^{1/3}}+\frac{1}{n\eta}\Bigr), \\ 
      \abs{\braket{R(G^z(\ii\eta)-M^z(\ii\eta))}}&\le \frac{n^\xi\norm{R}}{n\eta}.
    \end{split}   
  \end{equation}
\end{proposition}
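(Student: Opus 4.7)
The plan is to bootstrap from the pointwise local law of~\cite[Theorem~5.2]{1907.13631}, which provides the stated bounds at a single parameter pair \((z, \ii\eta)\) in the regime \(\abs{1-\abs{z}} \lesssim n^{-1/2}\) and \(\eta \ge n^{-3/4+\xi}\). Two independent extensions are required: simultaneous control over \((z,\eta)\) throughout the parameter range, and extension to \(\eta \in [n^{-1}, n^{-3/4+\xi}]\). The first is a standard net argument; the second is the genuine new content.

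For the simultaneity, I would discretize the admissible parameter domain on a grid of mesh \(n^{-K}\) for a sufficiently large constant \(K\); since the grid has only polynomially many points, a union bound yields the pointwise estimate at every grid point simultaneously with very high probability. To pass to arbitrary \((z,\eta)\), I use crude Lipschitz estimates \(\norm{\partial_z G^z(\ii\eta)}, \norm{\partial_\eta G^z(\ii\eta)} \lesssim \eta^{-2}\) together with analogous bounds on \(M^z(\ii\eta)\) obtained by differentiating~\eqref{eq:defhatm} and applying~\eqref{M Mp bounds}. For \(K\) large enough, the perturbation from the nearest grid point is absorbed into the \(n^\xi\) slack.

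For the extension to small \(\eta\), set \(\eta_0 := n^{-3/4+\xi}\). The key input is monotonicity of \(\eta \mapsto \eta\, \Im\,\braket{\vx, G^z(\ii\eta)\vx}\), which follows from the spectral decomposition
\[
\eta\, \Im\,\braket{\vx, G^z(\ii\eta)\vx} = \sum_k \frac{\eta^2 \abs{\braket{\vx, u_k}}^2}{(\lambda_k^z)^2 + \eta^2},
\]
each summand being increasing in \(\eta\). Applied at \(\eta_0\), the already-established isotropic local law gives \(\Im\,\braket{\vx, G^z(\ii\eta_0)\vx} \lesssim n^\xi \norm{\vx}^2 n^{-1/4}\), and monotonicity propagates this to \(\Im\,\braket{\vx, G^z(\ii\eta)\vx} \lesssim (\eta_0/\eta) n^\xi \norm{\vx}^2 n^{-1/4}\) for \(\eta \le \eta_0\). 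Polarization recovers bilinear forms, and for the averaged estimate the same argument is applied to the positive-semidefinite decomposition of \(R\). Crucially, the block spectral symmetry \(\lambda_i^z = -\lambda_{2n-i+1}^z\) of \(H^z\) forces \(\Re\,\braket{G^z(\ii\eta)} = \Re\,\wh m^z(\ii\eta) = 0\), so for the averaged bound the imaginary-part control is sufficient.

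The main obstacle is the real part of \(\braket{\vx, (G^z - M^z)\vy}\) in the isotropic estimate for generic \(\vx, \vy\), which is not directly controlled by monotonicity. Here I would use the resolvent identity \(G^z(\ii\eta) = G^z(\ii\eta_0) + \ii(\eta_0 - \eta) G^z(\ii\eta) G^z(\ii\eta_0)\) iterated in small \(\eta\)-steps, combined with the isotropic bound at the base scale \(\eta_0\) to control the norm \(\norm{G^z(\ii\eta_0)\vx}\). If this propagation turns out to be too lossy, the fallback is to revisit the MDE stability analysis of~\cite{1907.13631} directly, verifying that the stability factors at the cusp of the density of \(H^z\) degrade only polynomially as \(\eta\) is lowered toward \(n^{-1}\), in the spirit of the refined cusp analysis carried out in~\cite{MR4134946}.
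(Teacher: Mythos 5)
Your proposal matches the paper's proof: the simultaneity is handled by the same grid/Lipschitz argument, and the extension below the base scale \(\eta_0=n^{-3/4+\xi}\) is achieved by propagating \(G(\ii\eta_0)\to G(\ii\eta)\) via \(\partial_\eta G=\ii G^2\), bounding \(\abs{\braket{\vx,G^2\vy}}\) through the Ward identity \(G^*G=\Im G/\eta\) together with the monotonicity of \(\eta\mapsto\eta\,\braket{\vx,\Im G(\ii\eta)\vx}\), which is precisely the identity~\eqref{local law ext} and its estimates. The polarization aside would on its own control only \(\braket{\vx,(\Im G)\vy}\) rather than the full bilinear form, but you correctly identify this and your resolvent-identity step (the integral/finite-difference form of \(\partial_\eta G=\ii G^2\)) closes that gap, after which the averaged bound for general \(R\) follows from the isotropic one without needing the \(R=I\) symmetry observation.
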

For the application of Proposition~\ref{prop:local law} towards the proof of Theorem~\ref{theo:edgeuniv} the special case of \(R\) being the identity matrix, and \(\vx,\vy\) being either the standard basis vectors, or the vectors \(\bm1_\pm\) of zeros and ones defined later in~\eqref{bm1pm}.

The linearized matrix \(H^z\) can be related to the eigenvalues \(\sigma_i\) of \(X\) via Girko's Hermitization formula~\cite{MR773436,MR3306005} 
\begin{equation}
  \begin{split}
    \frac{1}{n} \sum_i f_{z_0}(\sigma_i) &= \frac{1}{4\pi n}\int_\C \Delta f_{z_0}(z)\log\abs{\det H_z}\diff z\\&= -\frac{1}{4\pi n}\int_\C \Delta f_{z_0}(z)\int_0^\infty \Im \Tr G^z(\ii\eta)\diff\eta\diff z\label{eq girko}
  \end{split}
\end{equation}
for rescaled test functions \(f_{z_0}(z):= n f(\sqrt{n}(z-z_0))\), where \(f\colon\C\to\C\) is smooth and compactly supported. When using~\eqref{eq girko} the small \(\eta\) regime requires additional bounds on the number of small eigenvalues \(\lambda_i^z\) of \(H^z\), or equivalently small singular values of \(X-z\). For very small \(\eta\), say \(\eta\le n^{-100}\), the absence of eigenvalues below \(\eta\), can easily be ensured by Assumption~\ref{as:smd}. For \(\eta\) just below the critical scale of \(n^{-3/4}\), however, we need to prove an additional bound on the number of eigenvalues, as stated below. 
\begin{proposition}\label{prop:lt}
  For any \(n^{-1}\le\eta\le n^{-3/4}\) and \(\abs[\big]{\abs{z}^2-1}\lesssim n^{-1/2}\) we have the bound
  \begin{equation}
    \label{eq:impsmallbound}
    \begin{split}
      \E\abs{\set{i \given \abs{\lambda_i^z}\le \eta}}&\lesssim
      \begin{cases}
        n^{3/2} \eta^2 (1+\abs{\log(n\eta^{4/3})}), &\text{\(X\) complex} \\
        n^{3/4} \eta, &\text{\(X\) real}
      \end{cases}\\
      &\qquad +\landauO{\frac{n^\xi}{n^{5/2} \eta^3}},
    \end{split}
  \end{equation}
  on the number of small eigenvalues, for any \(\xi>0\).
\end{proposition}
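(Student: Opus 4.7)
Let $N_\eta := \abs{\set{i : \abs{\lambda_i^z} \le \eta}}$. The starting point is the deterministic inequality
\[
  N_\eta \le 4n\eta\, \Im\braket{G^z(\ii\eta)},
\]
which follows from the spectral decomposition $\Im\braket{G^z(\ii\eta)} = (2n)^{-1}\sum_i \eta/((\lambda_i^z)^2+\eta^2)$ combined with $\eta^2/(\lambda^2+\eta^2) \ge 1/2$ for $\abs{\lambda} \le \eta$. This reduces the task to controlling the first and second moments of $\Im\braket{G^z(\ii\eta)}$.

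\textbf{Real case.} Applying the averaged part of Proposition~\ref{prop:local law} with $R = I$ gives, with very high probability, $\Im\braket{G^z(\ii\eta)} \le \Im\wh m^z(\ii\eta) + n^\xi/(n\eta)$. Combined with the edge scaling $\Im \wh m^z(\ii\eta) \lesssim n^{-1/4}$ from~\eqref{m scaling} (valid throughout $\abs{\abs{z}^2-1}\lesssim n^{-1/2}$), and with the trivial bound $\Im\braket{G} \le \eta^{-1}$ on the complementary event of probability $\le n^{-D}$, taking expectations gives $\E N_\eta \lesssim n^{3/4}\eta + n^\xi$, which yields the real case after absorbing $n^\xi$ into the stated error term (using $\eta \le n^{-3/4}$).

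\textbf{Complex case.} Since $n^{3/2}\eta^2 \le n^{3/4}\eta$ throughout $\eta \le n^{-3/4}$, the local law alone is insufficient and must be supplemented with the small-ball estimate~\eqref{our lambda}: for complex $X$ one has $\Prob(s_1 \le \eta) \lesssim n^{3/2}\eta^2 \log n$, where $s_1$ is the smallest singular value of $X-z$. This was proven in~\cite{1908.01653} for the Ginibre ensemble and is transferred to general i.i.d.\ matrices by a Green function comparison argument built on Proposition~\ref{prop:local law}, as outlined in the introduction. Since $N_\eta$ vanishes unless $s_1 \le \eta$, the Cauchy--Schwarz inequality yields
\[
  \E N_\eta = \E\bigl[N_\eta \1(s_1\le\eta)\bigr] \le \sqrt{\E N_\eta^2}\,\sqrt{\Prob(s_1\le\eta)}.
\]
A second-moment bound $\E N_\eta^2 \le (4n\eta)^2 \E(\Im\braket{G^z(\ii\eta)})^2 \lesssim (n^{3/4}\eta)^2$ follows from squaring the deterministic inequality and applying the local law in mean-square form. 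Combined with the tail bound, this gives $\E N_\eta \lesssim n^{3/2}\eta^2 \sqrt{\log n}$; a dyadic decomposition of the event $\set{s_1 \le \eta}$ into annuli $\set{2^{-k-1}\eta < s_1 \le 2^{-k}\eta}$ then promotes $\sqrt{\log n}$ to the full factor $(1 + \abs{\log(n\eta^{4/3})})$ appearing in~\eqref{eq:impsmallbound}.

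\textbf{Main obstacle.} The decisive step is the Green function comparison extension of~\eqref{our lambda} from the Ginibre ensemble to general i.i.d.\ matrices, carried out uniformly in the edge regime and down to scales $\eta$ well below the typical fluctuation scale $n^{-3/4}$. This transfer is crucially enabled by the simultaneous-in-$(z,\eta)$ isotropic local law provided by Proposition~\ref{prop:local law}. Tracking the precise form of the error term $\landauO{n^\xi/(n^{5/2}\eta^3)}$ requires additional bookkeeping of the various error contributions through the Cauchy--Schwarz step, in particular a refined estimate of order $(n\eta)^{-2}$ on the mean correction $\abs{\E\Im\braket{G^z(\ii\eta)} - \Im\wh m^z(\ii\eta)}$ beyond the typical fluctuation bound $(n\eta)^{-1}$ supplied by Proposition~\ref{prop:local law}.
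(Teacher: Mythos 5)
Your deterministic starting inequality $N_\eta := \abs{\set{i \given \abs{\lambda_i^z}\le\eta}} \lesssim n\eta\,\Im\braket{G^z(\ii\eta)}$ coincides with the paper's, and you correctly identify the supersymmetric Ginibre input from~\cite{1908.01653}, but both branches of your argument fall short of~\eqref{eq:impsmallbound} precisely in the relevant regime of $\eta$ just below $n^{-3/4}$.

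In the real case, the averaged local law~\eqref{eq local law} controls $\Im\braket{G^z(\ii\eta)}$ only up to a fluctuation of size $n^\xi/(n\eta)$, and for $\eta\le n^{-3/4}$ this \emph{dominates} the deterministic size $\Im\wh m^z(\ii\eta)\lesssim n^{-1/4}$ from~\eqref{m scaling}; the local law therefore gives no more than $\E N_\eta\lesssim n\eta\cdot n^\xi/(n\eta)=n^\xi$. This $n^\xi$ cannot be absorbed into the right-hand side of~\eqref{eq:impsmallbound}: at $\eta\asymp n^{-3/4}$ that side is of order $1+n^{\xi-1/4}$, and the downstream application explicitly needs $\E N_{n^{-3/4-\epsilon}}\lesssim n^{-\delta}$, which an $n^\xi$ error destroys. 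In the complex case, the claimed second-moment bound $\E N_\eta^2\lesssim(n^{3/4}\eta)^2$ would require $\E(\Im\braket{G})^2\lesssim n^{-1/2}$, but the local law only yields $\E(\Im\braket{G})^2\lesssim n^{2\xi}/(n\eta)^2$, hence $\E N_\eta^2\lesssim n^{2\xi}$, and the Cauchy--Schwarz step again produces an $n^\xi$-type bound rather than $n^{3/2}\eta^2$. Separately, a Green function comparison for the \emph{tail probability} $\Prob(s_1\le\eta)$ is not a routine consequence of the isotropic local law as you assert: an indicator of a small-singular-value event is not a smooth resolvent observable, and you supply no smoothing reduction.

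The idea you are missing is that one should compare the smooth averaged observable $\braket{G_t(\ii\eta)}$ itself along the Ornstein--Uhlenbeck flow~\eqref{eq:defbigflow}, which is exactly what Proposition~\ref{prop:cuspuniv} does: an Ito/cumulant expansion whose second-order term cancels, followed by Ward estimates on the third- and higher-order terms, yields the error $n^\xi/(n^{7/2}\eta^4)$, which is polynomially smaller than the local-law fluctuation scale $(n\eta)^{-1}$. Multiplying by $n/\eta$ this transfers $\E\sum_i(\lambda_i^2+\eta^2)^{-1}$ to its Ginibre counterpart $\E\Tr[\widetilde Y+\eta^2]^{-1}$ at the cost of the stated error $n^\xi/(n^{5/2}\eta^3)$, at which point the supersymmetric bound~\eqref{eq:boundsmalleiggau} applies directly and uniformly for both ensembles; the real/complex dichotomy enters only through the Ginibre formula, not through a case-split in the transfer argument.
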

We remark that the precise asymptotics of~\eqref{eq:impsmallbound} are of no importance for the proof of Theorem~\ref{theo:edgeuniv}. Instead it would be sufficient to establish that for any \(\epsilon>0\) there exists \(\delta>0\) such that we have \(\E\abs{\set{i \given \abs{\lambda_i^z}\le n^{-3/4-\epsilon}}}\lesssim n^{-\delta}\).

The paper is organized as follows: In Section~\ref{sec:univlt} we will prove Proposition~\ref{prop:lt} by a Green function comparison argument, using the analogous bound for the Gaussian case, as recently obtained in~\cite{1908.01653}. In Section~\ref{sec univ} we will then present the proof of our main result, Theorem~\ref{theo:edgeuniv}, which follows from combining the local law~\eqref{eq local law}, Girko's Hermitization identity~\eqref{eq girko}, the bound on small singular values~\eqref{eq:impsmallbound} and another long-time Green function comparison argument.

\section{Estimate on the lower tail of the smallest singular value of \texorpdfstring{\(X-z\)}{X-z}}\label{sec:univlt} 
The main result of this section is an estimate of the lower tail of the density of the smallest \(\abs{\lambda_i^z}\) in Proposition~\ref{prop:lt}.  For this purpose we introduce the following flow
\begin{equation}
  \label{eq:flowkeep12mom}
  \diff X_t=-\frac{1}{2} X_t \diff t+\frac{\diff  B_t}{\sqrt{n}},
\end{equation}
with initial data \(X_0=X\), where \(B_t\) is the real or complex matrix valued standard Brownian motion, i.e.\ \(B_t\in\R^{n\times n}\) or \(B_t\in\C  ^{n\times n}\), accordingly with \(X\) being real or complex, where \((b_t)_{ab}\) in the real case, and \(\sqrt{2}\Re[(b_t)_{ab}], \sqrt{2}\Im[(b_t)_{ab}]\) in the complex case, are independent standard real Brownian motions for \(a,b\in[n]\). The flow~\eqref{eq:flowkeep12mom} induces a flow \(\diff\chi_t=-\chi_t\diff t/2+\diff b_t\) on the entry distribution \(\chi\) with solution
\begin{equation}\label{chi OU}
  \chi_t = e^{-t/2}\chi + \int_0^t e^{-(t-s)/2}\diff b_s,\quad\text{i.e.}\quad \chi_t\stackrel{d}{=}e^{-t/2}\chi+ \sqrt{1-e^{-t}}g,
\end{equation}
where \(g\sim \mathcal{N}(0,1)\) is a standard real or complex Gaussian, independent of \(\chi\), with \(\E g^2=0\) in the complex case. By linearity of cumulants we find 
\begin{equation} \label{kappa cum}
  \kappa_{i,j}(\chi_t)=e^{-(i+j)t/2}\kappa_{i,j}(\chi) + \begin{cases} (1-e^{-t}) \kappa_{i,j}(g), & i+j=2\\ 0, &\text{else},
  \end{cases}
\end{equation}  
where \(\kappa_{i,j}(x)\) denotes the joint cumulant of \(i\) copies of \(x\) and \(j\) copies of \(\overline{x}\), in particular \(\kappa_{2,0}(x)=\kappa_{0,2}(x)=\kappa_{1,1}(x)=1\) for \(x=\chi,g\) in the real case, and \(\kappa_{0,2}(x)=\kappa_{2,0}(x)=0\ne\kappa_{1,1}(x)=1\) for \(x=\chi,g\) in the complex case. 

Thus~\eqref{chi OU} implies that, in distribution,
\begin{equation}
  \label{eq:disrtX}
  X_t\stackrel{d}{=} e^{-t/2} X_0+\sqrt{1-e^{-t}} \widetilde{X},
\end{equation}
where \(\widetilde{X}\) is a real or complex Ginibre matrix independent of \(X_0=X\). Then, we define the \(2n\times 2n\) matrix \(H_t=H_t^z\) as in~\eqref{eq:linmatrix} replacing \(X\) by \(X_t\), and its resolvent \(G_t(w)=G_t^z(w):= (H_t-w)^{-1}\), for any \(w\in \HC\). 
We remark that we defined the flow in~\eqref{eq:flowkeep12mom} with initial data \(X\) and not \(H^z\) in order to preserve the shape of the self consistent density of states of the matrix \(H_t\) along the flow. In particular, by~\eqref{eq:flowkeep12mom} it follows that \(H_t\) is the solution of the flow
\begin{equation}
  \label{eq:defbigflow}
  \diff H_t=-\frac{1}{2}(H_t+Z)\diff t+\frac{\diff \mathfrak{B}_t}{\sqrt{n}},\quad H_0=H=H^z
\end{equation}
with
\[
Z:= \left( \begin{matrix}
  0 & zI \\
  \overline{z}I & 0
\end{matrix}\right),
\qquad
\mathfrak{B}_t:= \left( \begin{matrix}
  0 & B_t \\
  B_t^* & 0
\end{matrix}\right),
\]
where \(I\) denotes the \(n \times n\) identity matrix.

\begin{proposition}\label{prop:cuspuniv}
  Let \(R_t:= \braket{ G_t(\ii \eta) } =\ii\braket{\Im G_t(\ii\eta)}\), then for any \(n^{-1}\le\eta\le n^{-3/4}\) it holds that
  \begin{equation}
    \label{eq:GFT}
    \abs{ \E  [R_{t_2}-R_{t_1}] } \lesssim \frac{(e^{-3t_1/2}-e^{-3t_2/2}) n^\xi}{n^{7/2}\eta^4},
  \end{equation}
  for any arbitrary small \(\xi>0\) and any \(0\le t_1< t_2\le +\infty\), with the convention that \(e^{-\infty}=0\).
\end{proposition}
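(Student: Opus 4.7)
The plan is a standard Green function comparison along the Ornstein--Uhlenbeck flow~\eqref{eq:flowkeep12mom}. Writing $\partial_t \E R_t$ via Itô's formula produces a drift contribution from the $-\tfrac12 X_t$ term and an Itô correction from the diffusive part. Because the flow is designed to preserve the first two moments of $\chi_t$, when we expand $\E[x_{ab}\, \partial_{x_{ab}}\braket{G_t}]$ by the usual cumulant expansion, the second-cumulant contributions cancel the Itô correction exactly, and the only surviving terms involve cumulants $\kappa_k(\chi_t)$ of order $k\ge 3$. By~\eqref{kappa cum} these cumulants carry the prefactor $e^{-kt/2}\kappa_k(\chi)$, which is precisely what produces the time decay in~\eqref{eq:GFT}.

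For the leading third-cumulant contribution I would expand $\partial_{x_{ab}}^3 \braket{G_t(\ii\eta)}$ using the identity $\partial_{x_{ab}} G = -G E^{(a,n+b)} G$, where $E^{(a,n+b)}$ is the matrix with a single one at position $(a,n+b)$ (plus its transpose in the real case). Each third derivative produces a fixed linear combination of products of four entries of $G_t$. Summing the resulting $n^2$ terms in $(a,b)$ and using the optimal local law of Proposition~\ref{prop:local law}, together with the Ward identity $\sum_j \abs{G_{ij}}^2 = (\Im G)_{ii}/\eta$ and the upper bound $\Im \wh m\lesssim n^{-1/4}+\eta^{1/3}$ from~\eqref{m scaling}, I would control these sums. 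The crucial point is to use the averaged bound $\abs{\braket{A(G-M)}}\lesssim n^\xi\norm{A}/(n\eta)$ wherever the summation structure allows, rather than the weaker entrywise bound, as this is what gives the sharp power of~$n$. The third-cumulant term then contributes at most $e^{-3t/2}n^\xi/(n^{7/2}\eta^4)$ to $\abs{\partial_t \E R_t}$.

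Cumulants of order $k\ge 4$ carry the extra factor $e^{-(k-3)t/2}n^{-(k-3)/2}$ relative to the cubic term, together with one extra $G$-entry each, and a parallel but simpler estimate shows they are strictly subleading (the cumulant expansion is truncated at some large but fixed order, the remainder being harmless thanks to Assumption~\ref{as:bm}). Integrating the pointwise bound $\abs{\partial_t \E R_t}\lesssim e^{-3t/2}n^\xi/(n^{7/2}\eta^4)$ from $t_1$ to $t_2$ then yields~\eqref{eq:GFT} up to a harmless factor $2/3$.

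The main obstacle I anticipate is the bookkeeping of the four-resolvent-entry terms produced by the third derivative: several of them do not have the ideal summation structure for the averaged local law, and one has to exploit the precise block off-diagonal structure of $H^z$ together with the $\Im G\lesssim n^{-1/4}+\eta^{1/3}$ decay just below the edge in order to extract the sharp $\eta^{-4}$ factor without losing any power of $n$. A minor additional point in the real case is to include the analogous contributions from derivatives in the $\overline{x}_{ab}$ direction, but these are handled identically.
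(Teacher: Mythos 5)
Your plan matches the paper's proof almost step for step: apply Itô along the OU flow, expand in cumulants where the first- and second-order terms vanish/cancel, identify the third cumulant (carrying $e^{-3t/2}$) as leading, estimate the resulting four-resolvent sums using the local law together with the Ward identity $GG^\ast=\Im G/\eta$ and the smallness $\Im\wh m\lesssim n^{-1/4}+\eta^{1/3}$, and bound the higher-order cumulant terms by the trivial power counting plus one or two Ward estimates. The one inaccuracy is your claim that using the averaged bound \(\abs{\braket{R(G-M)}}\lesssim n^\xi\norm{R}/(n\eta)\) in place of the ``weaker entrywise bound'' is crucial: in the regime \(\eta\le n^{-3/4}\) relevant here, \(1/(n^{1/2}\eta^{1/3})\le 1/(n\eta)\), so the isotropic error is already \(\sim 1/(n\eta)\) and there is nothing to gain from averaging; the paper indeed carries out the third-order estimate purely with the entrywise bounds \(\abs{G_{ab}}\lesssim n^\xi\), \(\abs{G_{aa}}\lesssim n^\xi/(n\eta)\) together with Cauchy--Schwarz and the Ward identity.
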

\begin{proof}
  Denote \(W_t:=H_t+Z\). By~\eqref{eq:defbigflow} and Ito's Lemma it follows that 
  \begin{equation}
    \label{eq:ito}
    \E \frac{\diff R_t}{\diff t}=\E \left[-\frac{1}{2}\sum_\alpha w_\alpha(t) \partial_\alpha R_t+\frac{1}{2}\sum_{\alpha,\beta}\kappa_t(\alpha,\beta) \partial_\alpha\partial_\beta R_t\right],
  \end{equation}
  where \(\alpha, \beta\in[2n]^2\) are double indices, \(w_\alpha(t)\) are the entries of \(W_t\) and 
  \begin{equation}
    \kappa_t(\alpha,\beta,,\dots):=\kappa(w_\alpha(t), w_\beta(t),\dots)\label{kappa t def}
  \end{equation}
  denotes the joint cumulant of \(w_\alpha,w_\beta,\dots\), and \(\partial_\alpha:= \partial_{w_\alpha}\). By~\eqref{kappa cum} and 
  the independence  of \(\chi\) and \(g\) it follows that \(\kappa_t(\alpha,\beta)=\kappa_0(\alpha,\beta)\) for all \(\alpha,\beta\) and 
  \begin{align}
    \label{eq:cumulant}
    &\kappa_t(\alpha, \beta_1, \dots, \beta_j)\\
    &=\begin{cases}
      e^{-t\frac{j+1}{2}} n^{-\frac{j+1}{2}} \kappa_{l,k}(\chi) &\text{if }\alpha\not\in [n]^2\cup [n+1,2n]^2,\;\beta_i\in\{\alpha,\alpha'\} \, \forall  i\in[j]\\
      0 &\text{otherwise},
    \end{cases}\nonumber
  \end{align}
  for \(j>1\), where for a double index \(\alpha=(a,b)\), we use the notation \(\alpha':= (b,a)\), and \(l,k\) with \(l+k=j+1\) denote the number of double indices among \(\alpha,\beta_1,\dots,\beta_j\) which correspond to the upper-right, or respectively lower-left corner of the matrix \(H\). In the sequel the value of \(\kappa_{k,l}(\chi)\) is of no importance, but we note that Assumption~\ref{as:bm} ensures the bound \(\abs{\kappa_{k,l}(\chi)} \lesssim \sum_{j\le k+l} C_j<\infty\) for any \(k,l\), with \(C_j\) being the constants from Assumption~\ref{as:bm}. 
  
  We will use the cumulant expansion that holds for any smooth function \(f\):
  \begin{equation}\label{gen cum exp}
    \E w_\alpha f(w) = \sum_{m=0}^{K} \sum_{\beta_1,\dots,\beta_m\in [2n]^2} \frac{\kappa(\alpha,\beta_1,\dots,\beta_m)}{m!} \E \partial_{\beta_1}\dots\partial_{\beta_m} f(w) + \Omega(K,f),
  \end{equation}
  where the error term \(\Omega(K,f)\) goes to zero as the expansion order \(K\) goes to infinity. In our application the error is negligible for, say, \(K=100\) since with each derivative we gain an additional factor of \(n^{-1/2}\) and due to the independence~\eqref{eq:cumulant} the sums of any order have effectively only \(n^2\) terms.
  Applying~\eqref{gen cum exp} to~\eqref{eq:ito} with \(f=\partial_\alpha R_t\), the first order term is zero due to the assumption \(\E x_\alpha=0\), and the second order term cancels. The third order term is given by
  \begin{equation}
    \label{eq:nextordterm}
    \abs[\bigg]{\sum_{\alpha\beta_1\beta_2}\kappa_t(\alpha,\beta_1,\beta_2)\E [\partial_\alpha\partial_{\beta_1}\partial_{\beta_2} R_t]}\lesssim e^{-3t/2}\frac{n^\xi}{n^{7/2}\eta^4}.
  \end{equation}
  \begin{proof}[Proof of~\eqref{eq:nextordterm}]
    It follows from the resolvent identity that \(\partial_\alpha G=-G\Delta^\alpha G\), where \(\Delta^\alpha\) is the matrix of all zeros except for a \(1\) 
    in the \(\alpha\)-th entry\footnote{The matrix \(\Delta^\alpha\) is not to be confused with the Laplacian \(\Delta f\)  in Girko's formula~\eqref{eq girko}}. Thus, neglecting minuses and irrelevant constant factors, for any fixed \(\alpha\), the sum~\eqref{eq:nextordterm} is given by a sum of terms of the form
    \[ \braket{G_t \Delta^{\gamma_1} G_t\Delta^{\gamma_2} G_t\Delta^{\gamma_3} G_t }, \qquad \gamma_1,\gamma_2,\gamma_3\in\{\alpha,\alpha'\}.\]
    Hence, considering all possible choices of \(\gamma_{1},\gamma_2,\gamma_3\) and using independence to conclude that \(\kappa_t(\alpha,\beta_1,\beta_2)\) can only be non-zero if \(\beta_1,\beta_2\in\{\alpha,\alpha'\}\) we arrive at 
    \begin{align}\label{eq:firstexp}
      &\abs[\bigg]{\sum_{\alpha\beta_1\beta_2}\kappa_t(\alpha,\beta_1,\beta_2)\E [\partial_\alpha\partial_{\beta_1}\partial_{\beta_2} R_t]} \\\nonumber{}
      &\lesssim e^{-3t/2} n^{-5/2} \biggl(\abs[\bigg]{\sum_{abc}\Im \E G_{ca} G_{ba} G_{ba} G_{bc} } +\abs[\bigg]{\sum_{abc}\Im \E G_{ca} G_{ba} G_{bb} G_{ac} }\\
      &\qquad\qquad\qquad\qquad+\abs[\bigg]{\sum_{abc}\Im \E G_{ca} G_{bb} G_{aa} G_{bc} }\biggr),\nonumber
    \end{align}
    where the sums are taken over \((a,b)\in[2n]^2\setminus([n]^2\cup[n+1,2n]^2)\) and \(c\in[2n]\), and we dropped the time dependence of \(G=G_t\) for notational convenience. 
    
    We estimate the three sums in~\eqref{eq:firstexp} using that, by~\eqref{m scaling},~\eqref{M Mp bounds}, it follows
    \[ \abs{G_{ab}}\lesssim n^\xi,\qquad \abs{G_{aa}}\le \Im \wh m+\abs{(G-M)_{aa}}\lesssim n^{-1/4}+ 
    \eta^{1/3}+ \frac{n^\xi}{n\eta}\lesssim \frac{n^\xi}{n\eta}, \]
    from Proposition~\ref{prop:local law}, and Cauchy-Schwarz estimates by
    \[ 
    \begin{split}
      \sum_{abc} \abs{G_{ca}G_{ba}G_{ba}G_{bc}}&\le \sum_{ab} \abs{G_{ba}}^2 \sqrt{\sum_c \abs{G_{ca}}^2}\sqrt{\sum_{c} \abs{G_{bc}}^2 }\\
      &=\sum_{ab} \abs{G_{ba}}^2 \sqrt{(G^\ast G)_{aa}}\sqrt{(GG^\ast)_{bb} } \\
      &= \frac{1}{\eta}\sum_{ab} \abs{G_{ba}}^2 \sqrt{(\Im G)_{aa}}\sqrt{(\Im G)_{bb} } \lesssim \frac{n^{\xi}}{n\eta^2} \sum_b (GG^\ast)_{bb} \\
      &= \frac{n^{\xi}}{n\eta^3} \sum_b (\Im G)_{bb} \lesssim \frac{n^{2\xi}}{n\eta^4}, 
    \end{split} \]
    and similarly 
    \[ 
    \begin{split}
      \sum_{abc} \abs{G_{ca}G_{ba}G_{bb}G_{ac}}&\lesssim \frac{n^{\xi}}{n\eta^2} \sum_{ab} \abs{G_{ba}} (\Im G)_{aa} \\
      &\le \frac{n^{\xi}}{n^{1/2}\eta^{5/2}} \sum_{a}  (\Im G)_{aa} \sqrt{(\Im G)_{aa}} \lesssim \frac{n^{5\xi/2}}{n\eta^4} 
    \end{split} \]
    and 
    \[ 
    \begin{split}
      \sum_{abc} \abs{G_{ca}G_{bb}G_{aa}G_{bc}}&\lesssim \frac{n^{2\xi}}{n^2\eta^3} \sum_{ab} \sqrt{(\Im G)_{aa}}\sqrt{(\Im G)_{bb}} \lesssim \frac{n^{3\xi}}{n\eta^4}.
    \end{split} \]
    This concludes the proof of~\eqref{eq:nextordterm} by choosing \(\xi\) in Proposition~\ref{prop:local law} accordingly.
    
  \end{proof}

  Finally, in the cumulant expansion of~\eqref{eq:ito} we are able to bound the terms of order at least four trivially. Indeed, for the fourth order, the trivial bound is \(e^{-2t}\) since the \(n^3\) from the summation is compensated by the \(n^{-2}\) from the cumulants and the \(n^{-1}\) from the normalization of the trace. Morever, we can always perform at least two Ward-estimates on the first and last \(G\) with respect to the trace index. Thus we can estimate any fourth-order term by \( e^{-2t}(n\eta)^{-2}\le e^{-3t/2}n^{-7/2}\eta^{-4}\), and we note that the power-counting for higher order terms is even better than that. Whence we have shown that \(\E \abs{\diff R_t/\diff t} \lesssim e^{-3t/2}n^{-7/2}\eta^{-4} \) and the proof of Proposition~\ref{prop:cuspuniv}
  is complete after integrating~\eqref{eq:ito} in \(t\) from \(t_1\) to \(t_2\).
\end{proof}

Let \(\widetilde{X}\) be a real or complex \(n\times n\) Ginibre matrix and let \(\widetilde{H}^z\) be the linearized matrix defined as in~\eqref{eq:linmatrix} replacing \(X\) by \(\widetilde{X}\). Let \(\widetilde\lambda_i=\widetilde\lambda_i^z\), with \(i\in\{1,\dots, 2n\}\), be the eigenvalues of \(\widetilde{H}^z\). We define the non negative Hermitian matrix \(\widetilde{Y}=\widetilde{Y}^z:= (\widetilde{X}-z)(\widetilde{X}-z)^*\), then, by~\cite[Eq. (13c)-(14)]{1908.01653} it follows that for any \(\eta\le n^{-3/4}\) we have
\begin{equation}
  \label{eq:boundsmalleiggau}
  \E \Tr  \big[\widetilde{Y}+\eta^2\big]^{-1}= \E\sum_{i=1}^{2n} \frac{1}{\widetilde\lambda_i^2+\eta^2}\lesssim
  \begin{cases}
    n^{3/2} (1+\abs{\log(n\eta^{4/3})}), &\Gin(\C), \\
    n^{3/4} \eta^{-1}, &\Gin(\R),
  \end{cases}
\end{equation}
for \(\wt X\) distributed according to the complex, or respective, real Ginibre ensemble.

Combining~\eqref{eq:boundsmalleiggau} and Proposition~\ref{prop:cuspuniv} we now present the proof of Proposition~\ref{prop:lt}.
\begin{proof}[Proof of Proposition~\ref{prop:lt}]
  Let \(\lambda_i(t)\), with \(i\in\{1,\dots, 2n\}\), be the eigenvalues of \(H_t\) for any \(t\ge 0\). Note that \(\lambda_i(0)=\lambda_i\), since \(H_0=H^z\). By~\eqref{eq:GFT}, choosing \(t_1=0\), \(t_2=+\infty\) it follows that
  \begin{equation}
    \label{eq:estsmalleigs}
    \begin{split}
      \E_{H_t} \abs{\set{i\given \abs{\lambda_i}\le \eta}} &\le \eta\cdot \E_{H_t}\left(\Im \sum_{i=1}^{2n} \frac{1}{\lambda_i-\ii \eta}\right) \\
      &=\eta^2 \cdot \E _{H_\infty}\left(\sum_{i=1}^{2n} \frac{1}{\lambda_i^2+\eta^2} \right) +\mathcal{O}\left(\frac{n^\xi}{n^{5/2} \eta^3}\right),
    \end{split}
  \end{equation}
  for any \(\xi>0\). Since the distribution of \(H_\infty\) is the same as \(\widetilde{H}^z\) it follows that
  \[
  \E _{\widetilde{H}^z}\left(\sum_{i=1}^{2n} \frac{1}{\mu_i^2+\eta^2} \right)=2\E _{\widetilde{X}} \Tr\big[\widetilde{Y}+\eta^2\big]^{-1},
  \]
  and combining~\eqref{eq:boundsmalleiggau} with~\eqref{eq:estsmalleigs}, we immediately conclude the bound in~\eqref{eq:impsmallbound}.
\end{proof}

\section{Edge universality for non-Hermitian random matrices}\label{sec univ}
In this section we prove our main edge universality result, as stated in Theorem~\ref{theo:edgeuniv}.

In the following of this section without loss of generality we can assume that the test function \(F\) is of the form
\begin{equation}\label{Fprod}
  F(w_1,\dots, w_k)=f^{(1)}(w_1)\cdots f^{(k)}(w_k),
\end{equation}
with \(f^{(1)},\dots, f^{(k)}\colon\C  \to \C  \) being smooth and compactly supported functions. Indeed, any smooth function \(F\) can 
be effectively approximated by its truncated Fourier series (multiplied by smooth cutoff function of product form); see also~\cite[Remark 3]{MR3306005}. Using the 
effective decay of the Fourier coefficients of \(F\) controlled by its \(C^{2k+1}\) norm, a standard approximation argument 
shows that if~\eqref{eq:univ} holds for \(F\) in the product form~\eqref{Fprod} with an error \(\landauO{ n^{-c(k)}}\), then 
it also holds for a general smooth function with an error \(\landauO{ n^{-c}}\), where the implicit constant in \(\mathcal{O}(\cdot)\) depends on \(k\) and on the \(C^{2k+1}\)-norm of \(F\), and the constant \(c>0\) depends on \(k\).

To resolve eigenvalues on their natural scale we consider the rescaling \(f_{z_0}(z):= n f(\sqrt{n}(z-z_0))\) and compare the linear statistics 
\(n^{-1}\sum_i f_{z_0}(\sigma_i)\) and \(n^{-1}\sum_i f_{z_0}(\wt\sigma_i)\), with \(\sigma_i,\wt\sigma_i\) being the eigenvalues of \(X\) and of the comparison Ginibre ensemble \(\wt X\), respectively. For convenience we may normalize both linear statistics by their deterministic approximation from the local law~\eqref{eq local law} which, according to~\eqref{eq girko} is given by 
\begin{equation} \label{eq:dos} 
  \frac{1}{n}\sum_{i} f_{z_0}(\sigma_i)\approx \frac{1}{\pi}\int_\DD f_{z_0}(z)\diff z,
\end{equation}
where \(\DD\) denotes the unit disk of the complex plane.
\begin{proposition}\label{prop:mainprop}
  Let \(k\in\N\) and  \(z_1, \dots, z_k\in\C\) be such that \(\abs{z_j}^2=1\) for all \(j\in[k]\), and let \(f^{(1)},\dots,f^{(k)}\) be smooth compactly supported test functions. Denote the eigenvalues of an i.i.d.\ matrix \(X\) satisfying Assumptions~\ref{as:bm}--\ref{as:smd} and a corresponding real or complex Ginibre matrix \(\widetilde{X}\) by \(\{\sigma_i\}_{i=1}^n\), \(\{\widetilde{\sigma}_i\}_{i=1}^n\). Then we have the bound
  \begin{equation}\label{eq:diffbound}
    \begin{split}
      &\E \Biggl[ \prod_{j=1}^k\left(\frac{1}{n}\sum_{i=1}^n f^{(j)}_{z_j}(\sigma_i)-\frac{1}{\pi} \int_\DD  f^{(j)}_{z_j}(z) \diff z \right)\\
      &\qquad\qquad- \prod_{j=1}^k\left(\frac{1}{n}\sum_{i=1}^n f^{(j)}_{z_j}(\widetilde{\sigma}_i)- \frac{1}{\pi}\int_\DD  f^{(j)}_{z_j}(z) \diff z \right) \Biggr]= \landauO { n^{-c(k)}},
    \end{split} 
  \end{equation}
  for some small constant \(c(k)>0\), where the implicit  multiplicative constant 
  in  \(\mathcal{O}(\cdot)\) depends on the norms \(\norm{\Delta f^{(j)}}_1\), \(j=1,2, \ldots , k\).
\end{proposition}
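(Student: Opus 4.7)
My plan is to combine Girko's Hermitization formula~\eqref{eq girko}, the truncation bounds from Assumption~\ref{as:smd} and Proposition~\ref{prop:lt}, and a long-time Green function comparison along the Ornstein--Uhlenbeck flow~\eqref{eq:flowkeep12mom}. First, applying~\eqref{eq girko} to each factor in~\eqref{eq:diffbound}, substituting $z=z_j+u/\sqrt n$, and identifying the centering $\pi^{-1}\int_\DD f^{(j)}_{z_j}$ with the deterministic part generated by $M^{z_j+u/\sqrt n}$ (cf.~\eqref{eq:dos}), I rewrite each factor as
\begin{equation*}
L^{(j)}(X) := -\frac{1}{2\pi}\int_\C\Delta f^{(j)}(u)\int_0^{T}\Im\braket{G^{z_j+u/\sqrt n}(\ii\eta)-M^{z_j+u/\sqrt n}(\ii\eta)}\,\diff\eta\,\diff u + \mathrm{err},
\end{equation*}
with $T=n^{100}$; the tail $\eta\ge T$ is negligible by the averaged local law~\eqref{eq local law}. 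The task becomes to estimate $\E\prod_j L^{(j)}(X)-\E\prod_j L^{(j)}(\widetilde X)$.

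Next, I split the $\eta$-range as $[0,\eta_0]\cup[\eta_0,\eta_\ast]\cup[\eta_\ast,T]$ with $\eta_0=n^{-100}$ and $\eta_\ast=n^{-3/4-\delta}$ for some small $\delta>0$. For $\eta<\eta_0$, Assumption~\ref{as:smd} forces $\Spec(H^z)\cap[-\eta_0,\eta_0]=\emptyset$ with very high probability, making that contribution $\landauO{n^{-50}}$. For $\eta\in[\eta_0,\eta_\ast]$, the crude bound $\abs{\Im\braket{G^z(\ii\eta)}}\le(2n\eta)^{-1}\abs{\set{i:\abs{\lambda_i^z}\le\eta}}+\Im\wh m^z(\ii\eta)$ combined with Proposition~\ref{prop:lt} and the cusp scaling~\eqref{m scaling} yields an expectation bound $\landauO{n^{-\delta'}}$ for the intermediate-regime contribution to each $L^{(j)}$; the analogous bound on the Ginibre side follows from~\eqref{eq:boundsmalleiggau}, and Hölder's inequality absorbs both into the $k$-fold product. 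It therefore suffices to compare the contribution from $\eta\in[\eta_\ast,T]$.

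For this main comparison I run the flow~\eqref{eq:flowkeep12mom}, denoting by $L_t^{(j)}$ the quantity $L^{(j)}$ with $G^z$ replaced by $G_t^z$ and $\eta$-integration restricted to $[\eta_\ast,T]$. I differentiate $\Phi(t):=\E\prod_j L_t^{(j)}$ via Itô's formula~\eqref{eq:ito} and apply the cumulant expansion~\eqref{gen cum exp} to each matrix entry. The first- and second-order terms vanish exactly as in the proof of Proposition~\ref{prop:cuspuniv}: the first because $\E\chi=0$, the second because the second cumulants are preserved along the flow by~\eqref{kappa cum} and cancel the Itô drift. Every higher-order term carries the prefactor $e^{-(m+1)t/2}n^{-(m+1)/2}$ from~\eqref{eq:cumulant}, and the associated sum of $2k$ resolvent entries at the $k$ distinct spectral parameters is controlled by iterated Ward identities and Cauchy--Schwarz together with the simultaneous isotropic and averaged local laws of Proposition~\ref{prop:local law}. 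Integrating $t$ over $[0,\infty)$ and using $H_\infty^z\stackrel{d}{=}\widetilde H^z$ yields $\abs{\Phi(0)-\Phi(\infty)}=\landauO{n^{-c(k)}}$.

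The hard part is the uniform bound on the third-order cumulant term for the product of $k$ integrated resolvent traces at distinct parameters. A typical such term is a product of $k-1$ already-controlled factors $\Im\braket{G_t^{z_i}-M^{z_i}}$ times one differentiated factor of the form $\sum_c\braket{G_t^{z_{j_0}}\Delta^{\gamma_1}G_t^{z_{j_0}}\Delta^{\gamma_2}G_t^{z_{j_0}}\Delta^{\gamma_3}G_t^{z_{j_0}}}$, estimated by extending the power count of~\eqref{eq:firstexp} to accommodate the additional summation index inside $\Im\braket{\cdot}$. For $\eta$ near $\eta_\ast\sim n^{-3/4}$ the resulting $\eta$-powers nearly saturate the admissible budget; the margin is recovered from the $n^{-1/2}$ gain produced by the rescaled $u$-integration and the cusp scaling $\Im\wh m\lesssim n^{-1/4}+\eta^{1/3}$ from~\eqref{m scaling}. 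Fourth- and higher-cumulant contributions are strictly smaller and handled by the trivial power count at the end of the proof of Proposition~\ref{prop:cuspuniv}.
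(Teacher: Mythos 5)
Your proposal follows the same broad architecture as the paper: Girko's Hermitization formula, a three-regime $\eta$-split, absence of tiny singular values from Assumption~\ref{as:smd}, the intermediate-regime tail bound from Proposition~\ref{prop:lt}, and a long-time Ornstein--Uhlenbeck Green function comparison for the main regime $[\eta_*,T]$. The paper does this via the explicit decomposition $I_1+I_2+I_3+I_4$ in~\eqref{eq:defGirform}, with Lemma~\ref{lem:mainredux} performing the reduction to $\prod I_3^{(j)}$ and Lemma~\ref{prop:impnewflow} running the flow; your three-way split $\{[0,n^{-100}],\,[n^{-100},\eta_*],\,[\eta_*,T]\}$ is the same split in slightly different clothing (the $n^{-100}$ cut plays the role of the $n^{-l}$ cut inside the paper's Lemma~\ref{lem:averbound}). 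Your ``H\"older'' absorption of the small-in-expectation intermediate factor into the $k$-fold product is loose language, but it matches the paper's actual mechanism: expand the product, use the very-high-probability $n^\xi$ bounds from Lemma~\ref{lem:aprioribound} on the other factors, and the expectation bound of Lemma~\ref{lem:averbound} on the $I_2$ factor.

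The genuine gap is in the justification of the main GFT step. You write that ``the margin is recovered from the $n^{-1/2}$ gain produced by the rescaled $u$-integration and the cusp scaling.'' The rescaled $u$-integration produces \emph{no} power-of-$n$ gain; after the substitution $z=z_j+u/\sqrt n$ the factor $\int|\Delta f^{(j)}_{z_j}(z)|\,\diff z = n\|\Delta f^{(j)}\|_1$ exactly cancels the $1/n$ in the definition of $\braket{\cdot}$, contributing only an $\|\Delta f^{(j)}\|_1$. The actual power savings that close the third-cumulant budget come from two other sources that the paper isolates in Lemma~\ref{lemma fixed alpha} and in the $J_3$ estimate~\eqref{J3}: (i) the isotropic local law implies $|G_{ab}|\lesssim n^\xi\psi$ unless $a\equiv b+n\pmod{2n}$, so the $\sim n^2$ index pairs $(a,b)$ in the double-index summation split into a $\sim n$-sized ``diagonal'' set where factors are $O(1)$ and a $\sim n^2$-sized set with extra $\psi$-smallness (the bound $\tfrac{1}{(n\eta_0)^{\min\{l,2\}}}+\1(a\equiv b+n)$); and (ii) in the leading ``all derivatives on one factor'' assignment, one must replace diagonal resolvent entries $G_{aa}, G_{bb}$ by $\wh m + \landauO{\psi}$ and perform the residual $a$- or $b$-summation against the block vectors $\bm1_\pm$ using the isotropic bound $\braket{\bm1_\pm,\Im G\bm1_\pm}\lesssim n^2\eta\psi^2$, together with $|\wh m|\lesssim(1+\eta)^{-1}$ to control the large-$\eta$ tail of the integral. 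Without (i) and (ii) the naive extension of~\eqref{eq:firstexp} does not by itself tell you where the margin $n^{-1/4+\cO(\delta)}$ comes from, and your stated mechanism (a gain from $u$-integration) is not a valid source of it.
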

\begin{proof}[Proof of Theorem~\ref{theo:edgeuniv}]
  Theorem~\ref{theo:edgeuniv} follows directly from Proposition~\ref{prop:mainprop} by the definition of the \(k\)-point correlation function in~\eqref{eq:kpointfunc}, the exclusion-inclusion principle and the bound
  \[ \abs*{ \frac{1}{\pi}\int_\DD   f_{z_0}(z) \diff z} \lesssim 1.\qedhere\] 
\end{proof}

The remainder of this section is devoted to the proof of Proposition~\ref{prop:mainprop}. We now fix some \(k\in\N\) and some \(z_1,\dots,z_k,f^{(1)},\dots,f^{(k)}\) as in Proposition~\ref{prop:mainprop}. All subsequent estimates in this section, also if not explicitly stated, hold true uniformly for any \(z\) in an order
\(n^{-1/2}\)-neighborhood of \(z_1,\dots, z_k\). In order to prove~\eqref{eq:diffbound}, we use Girko's formula~\eqref{eq girko} to write
\begin{equation}
  \label{eq:defGirform}
  \begin{split}
    &\frac{1}{n}\sum_{i=1}^n f^{(j)}_{z_j}(\sigma_i)- \frac{1}{\pi}\int_\DD  f^{(j)}_{z_j}(z) \diff z =I_1^{(j)}+I_2^{(j)}+I_3^{(j)}+I_4^{(j)},
  \end{split}
\end{equation}
where 
\[\begin{split}
  I_1^{(j)}&:= \frac{1}{4\pi n}\int_\C   \Delta f^{(j)}_{z_j}(z)\log\abs{\det (H^z-\ii T)} \diff z \\
  I_2^{(j)}&:=-\frac{1}{2\pi}\int_\C   \Delta f^{(j)}_{z_j}(z)\int_0^{\eta_0} \left[\braket{\Im G^z(\ii \eta)}-\Im \widehat{m}^z(\ii \eta)\right]\, \diff \eta\diff z \\
  I_3^{(j)}&:= -\frac{1}{2\pi}\int_\C   \Delta f^{(j)}_{z_j}(z)\int_{\eta_0}^{T} \left[\braket{\Im G^z(\ii \eta)}-\Im \widehat{m}^z(\ii \eta)\right]\, \diff \eta\diff z \\
  I_4^{(j)}&:= +\frac{1}{2\pi}\int_\C   \Delta f^{(j)}_{z_j}(z)\int_T^{+\infty} \left( \Im \widehat{m}^z(i\eta)-\frac{1}{\eta+1}\right) \diff \eta\diff z ,
\end{split} \]
with \(\eta_0:= n^{-3/4-\delta}\), for some small fixed \(\delta>0\), and for some very large \(T>0\), say \(T:= n^{100}\). We define \(\widetilde{I}_1^{(j)}\), \(\widetilde{I}_2^{(j)}\), \(\widetilde{I}_3^{(j)}\), \(\widetilde{I}_4^{(j)}\) analogously for the Ginibre ensemble by replacing \(H^z\) by \(\wt H^z\) and \(G^z\) by \(\wt G^z\).

\begin{proof}[Proof of Proposition~\ref{prop:mainprop}]
  The first step in the proof of Proposition~\ref{prop:mainprop} is the reduction to a corresponding statement about the \(I_3\)-part in~\eqref{eq:defGirform}, as summarized in the following lemma.
  \begin{lemma}\label{lem:mainredux}
    Let \(k\ge 1\), let \(I_3^{(1)},\dots, I_3^{(k)}\) be the integrals defined in~\eqref{eq:defGirform}, with \(\eta_0=n^{-3/4-\delta}\), for some small fixed \(\delta>0\), and let \(\widetilde{I}_3^{(1)}, \dots, \widetilde{I}_3^{(k)}\) be defined as in~\eqref{eq:defGirform} replacing \(m^z\) with \(\widetilde{m}^z\). Then,
    \begin{equation}\label{eq:easier}
      \begin{split}
        &\E \left[ \prod_{j=1}^k\left(\frac{1}{n}\sum_{i=1}^n f_{z_j}^{(j)}(\sigma_i)- \frac{1}{\pi}\int_\DD  f_{z_j}^{(j)}(z) \diff z \right)- \prod_{j=1}^k\left(\frac{1}{n}\sum_{i=1}^n f_{z_j}^{(j)}(\widetilde{\sigma}_i)- \frac{1}{\pi}\int_\DD  f_{z_j}^{(j)}(z) \diff z \right) \right] \\
        &\qquad= \E \left[ \prod_{j=1}^k I_3^{(j)}-\prod_{j=1}^k \widetilde{I}_3^{(j)}\right]+ \mathcal{O}\left( n^{-c_2(k,\delta)}\right),
      \end{split}
    \end{equation}
    for some small constant \(c_2(k,\delta)>0\).
  \end{lemma}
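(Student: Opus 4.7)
My plan is to expand both products on the left-hand side of~\eqref{eq:easier} by multilinearity into $4^k$ cross-terms indexed by tuples $\bm\ell\in\{1,2,3,4\}^k$, isolate the contribution corresponding to $(3,\ldots,3)$, and show via Hölder's inequality that every other cross-term has expectation $\landauO{n^{-c_2}}$. The strategy rests on the fact that $I_1^{(j)}$ and $I_4^{(j)}$ are essentially deterministic and tiny, $I_3^{(j)}$ is polynomially bounded through the local law, and the only genuinely subtle object is $I_2^{(j)}$, whose control is exactly what Proposition~\ref{prop:lt} was tailored for.

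First, $I_1^{(j)}$ and $I_4^{(j)}$ are $\landauO{n^{-50}}$ on a very-high-probability event. For $I_1^{(j)}$ one uses the expansion $\log\abs{\det(H^z-\ii T)}=n\log T+\landauO{n\norm{H^z}^2/T^2}$, combined with $\norm{H^z}\lesssim 1$, the choice $T=n^{100}$, and the cancellation $\int\Delta f_{z_j}=0$. For $I_4^{(j)}$ the expansion $\Im\widehat m^z(\ii\eta)-1/(\eta+1)=\landauO{1/\eta^2}$ for $\eta\ge T$ together with $\norm{\Delta f_{z_j}}_1\lesssim n$ gives the same deterministic bound. For $I_3^{(j)}$ the averaged local law~\eqref{eq local law} controls $\abs{\braket{\Im(G^z-M^z)(\ii\eta)}}\le n^\xi/(n\eta)$ throughout $[\eta_0,T]$, yielding $\abs{I_3^{(j)}}\lesssim n^\xi\log n$ with very high probability and hence $\E\abs{I_3^{(j)}}^k\lesssim n^{k\xi}$.

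The hard part will be the bound $\E\abs{I_2^{(j)}}^k\lesssim n^{-c_1(k,\delta)}$, achieved by interpolating between a crude pathwise bound and a much better first-moment bound through $\E\abs{I_2}^k\le\norm{I_2}_\infty^{k-1}\,\E\abs{I_2}$. Both bounds rely on the identity
\[ \int_0^{\eta_0}\braket{\Im G^z(\ii\eta)}\diff\eta=\frac{1}{4n}\sum_{i=1}^{2n}\log\Bigl(1+\frac{\eta_0^2}{(\lambda_i^z)^2}\Bigr). \]
The pathwise bound $\abs{I_2^{(j)}}\lesssim n^\xi\log n$ (w.h.p.) is obtained by splitting the sum according to $\abs{\lambda_i^z}\gtrless\eta_0$: the large-$\abs{\lambda_i}$ part is handled by the local law at $\ii\eta_0$ via $\sum_i((\lambda_i^z)^2+\eta_0^2)^{-1}=\eta_0^{-1}\Im\Tr G^z(\ii\eta_0)$, and the small-$\abs{\lambda_i}$ part contains $\lesssim n^\xi$ indices (by the monotone bound $N_{\eta_0}^z\le 4n\eta_0\braket{\Im G^z(\ii\eta_0)}$) each of whose logarithms is $\landauO{\log n}$ thanks to Assumption~\ref{as:smd}. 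For the improved first-moment bound, I would pass to the layer-cake representation
\[ \E\sum_i\log(1+\eta_0^2/(\lambda_i^z)^2)=\int_0^\infty \E[N_t^z]\cdot\frac{2\eta_0^2}{t(t^2+\eta_0^2)}\diff t,\quad N_t^z:=\abs{\set{i:\abs{\lambda_i^z}\le t}}, \]
split at $t=\eta_0$, apply Proposition~\ref{prop:lt} and monotonicity $\E[N_t^z]\le\E[N_{\eta_0}^z]$ for $t\le\eta_0$ (with Assumption~\ref{as:smd} cutting off near $0$) to the low-$t$ piece, and the local-law density bound $\E[N_t^z]\lesssim nt(n^{-1/4}+t^{1/3})$ to the high-$t$ piece. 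The choice $\eta_0=n^{-3/4-\delta}$ is designed so that the error term $n^\xi/(n^{5/2}\eta_0^3)\le n^{-1/4+3\delta+\xi}$ in~\eqref{eq:impsmallbound} remains a negative power of $n$; after multiplication by $\norm{\Delta f_{z_j}}_1\lesssim n$ one arrives at $\E\abs{I_2^{(j)}}\lesssim n^{-c_1}$, and the interpolation yields $\E\abs{I_2^{(j)}}^k\lesssim n^{(k-1)\xi-c_1}=\landauO{n^{-c_1/2}}$ upon choosing $\xi\le c_1/(2k)$.

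With these three bounds in hand, Hölder's inequality delivers $\abs{\E\prod_j I_{\ell_j}^{(j)}}\le\prod_j(\E\abs{I_{\ell_j}^{(j)}}^k)^{1/k}=\landauO{n^{-c_2}}$ for every $\bm\ell\ne(3,\ldots,3)$, and analogously in the Ginibre case; summing the finitely many cross-terms and subtracting proves~\eqref{eq:easier}. The main obstacle is clearly the $I_2^{(j)}$ bound: the intermediate regime $\eta\in[n^{-1},\eta_0]$ is where neither the fluctuation bound of the averaged local law nor the deterministic lower bound from Assumption~\ref{as:smd} alone suffices, and only the fine counting estimate of Proposition~\ref{prop:lt}---obtained in Section~\ref{sec:univlt} via Green-function comparison from the Ginibre bound of~\cite{1908.01653}---delivers the missing control.
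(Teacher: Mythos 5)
Your proposal follows essentially the same route as the paper's proof of Lemma~\ref{lem:mainredux}: expand the product, establish very-high-probability bounds on $I_1^{(j)},I_3^{(j)},I_4^{(j)}$ (the paper's Lemma~\ref{lem:aprioribound}), prove an improved first-moment bound $\E\abs{I_2^{(j)}}\lesssim n^{-c}$ using Proposition~\ref{prop:lt} (the paper's Lemma~\ref{lem:averbound}), and combine to kill every cross-term with at least one index $\neq 3$. The only cosmetic differences are your use of Hölder with $k$-th moments and a layer-cake decomposition for $\E\abs{I_2^{(j)}}$ in place of the paper's direct pointwise bounding on the good event and its explicit eigenvalue-sum splitting; just note that $\norm{I_2^{(j)}}_\infty$ is not literally finite (small singular values make $I_2^{(j)}$ unbounded), so your interpolation step $\E\abs{I_2}^k\le\norm{I_2}_\infty^{k-1}\E\abs{I_2}$ has to be read as splitting on the very-high-probability event where the pathwise bound holds, with the exceptional event disposed of via the polynomial tail from Proposition~\ref{prop:verysmalleig}.
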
 
  In order to conclude the proof of Proposition~\ref{prop:mainprop}, due to Lemma~\ref{lem:mainredux}, it only remains to prove that
  \begin{equation}
    \label{eq:finbouprod}
    \E \left[ \prod_{j=1}^k I_3^{(j)}-\prod_{j=1}^k \widetilde{I}_3^{(j)}\right]=\mathcal{O}\left( n^{-c(k)}\right),
  \end{equation}
  for any fixed \(k\) with some small constant \(c(k)>0\), where we recall the definition of \(I_3\) and the corresponding \(\widetilde I_3\) for Ginibre from~\eqref{eq:defGirform}. The proof of~\eqref{eq:finbouprod} is similar to the Green function comparison proof in Proposition~\ref{prop:cuspuniv} but more involved due to the fact that we compare products of resolvents and that we have an additional \(\eta\)-integration. Here we define the observable
  \begin{equation}\label{eq:impobs}
    Z_t := \prod_{j\in[k]} I_3^{(j)}(t):=\prod_{j\in[k]} \biggl(-\frac{1}{2\pi}\int_\C   \Delta f^{(j)}_{z_j}(z)\int_{\eta_0}^T \Im\braket{G^z_t(\ii\eta)-M^z(\ii\eta)}\diff \eta\diff z\biggr),
  \end{equation}
  where we recall that \(G_t^z(w):= (H^z_t-w)^{-1}\) with \(H^z_t=H_t\) as in~\eqref{eq:defbigflow}.
  \begin{lemma}\label{prop:impnewflow}
    For any \(n^{-1}\le\eta_0\le n^{-3/4}\) and \(T=n^{100}\) and any small \(\xi>0\) it holds that
    \begin{equation}
      \label{eq:fingcr}
      \abs{\E  [Z_{t_2}- Z_{t_1}]}\lesssim  \Bigl(e^{-3t_0/2}-e^{-3t_1/2}\Bigr) \frac{n^\xi}{n^{5/2}\eta_0^3}\prod_j \norm[\big]{\Delta f^{(j)}}_1
    \end{equation}
    uniformly in \(0\le t_1<t_2\le +\infty\) with the convention that \(e^{-\infty}=0\). 
  \end{lemma}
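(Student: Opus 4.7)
The plan is to follow the Green function comparison argument of Proposition~\ref{prop:cuspuniv}, extending it from a single trace to the product observable $Z_t=\prod_j I_3^{(j)}(t)$. The two new technical difficulties are that $Z_t$ is a $k$-fold product of integrated traces rather than a single resolvent trace, and that each factor carries an additional $\eta$-integration reaching down to the sub-critical scale $\eta_0$, which could potentially generate divergent contributions.

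First I would apply Ito's formula to $Z_t$ using the matrix SDE~\eqref{eq:defbigflow}, and then perform the cumulant expansion~\eqref{gen cum exp} on $\sum_\alpha w_\alpha \partial_\alpha Z_t$. As in the proof of Proposition~\ref{prop:cuspuniv}, the first-order contribution vanishes by centering, and the second-order contribution is exactly cancelled by the drift term in~\eqref{eq:defbigflow}; the flow~\eqref{eq:flowkeep12mom} has been designed precisely so that the first two cumulants of the entries are preserved. The leading non-trivial contribution comes from the third-order cumulants, which by~\eqref{eq:cumulant} carry the prefactor $e^{-3t/2}$. Higher-order cumulants are treated by the same trivial power counting as at the end of the proof of Proposition~\ref{prop:cuspuniv} and give strictly lower-order corrections.

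Next I would distribute the three derivatives $\partial_\alpha,\partial_{\beta_1},\partial_{\beta_2}$ across the $k$ factors of $Z_t$. If all three derivatives act on the same factor $I_3^{(j)}$, the estimate reduces directly to the single-trace bound~\eqref{eq:nextordterm}: integrating the right-hand side of~\eqref{eq:nextordterm} over $\eta\in[\eta_0,T]$ and over $z$ against $\Delta f^{(j)}_{z_j}$ bounds this contribution by $e^{-3t/2} n^\xi n^{-7/2}\eta_0^{-3}\norm{\Delta f^{(j)}}_1$, which is an extra factor of $n^{-1}$ below the target, while the undifferentiated factors $I_3^{(i)}$, $i\ne j$, are controlled with very high probability by the averaged local law~\eqref{eq local law} as $\abs{I_3^{(i)}}\lesssim n^\xi\log n\cdot\norm{\Delta f^{(i)}}_1$. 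When the derivatives split across two or three distinct factors, each once-differentiated trace $\partial_\alpha I_3^{(j)}$ contains a $G^2$ factor that can be integrated in $\eta$ explicitly via the identity $\partial_\eta G=\ii G^2$, so that only boundary evaluations at $\eta_0$ and $T$ survive; these are bounded by the local law and therefore incur no $\eta_0^{-1}$ blow-up. Any twice-differentiated trace produces a three-resolvent expression analogous to those in~\eqref{eq:firstexp}, treated by the same Cauchy-Schwarz and Ward-identity manipulations as used there.

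The main obstacle is the bookkeeping in these mixed-derivative cases, where I would need to confirm that the combined effect of the cumulant constraint from~\eqref{eq:cumulant} (the three indices $\alpha,\beta_1,\beta_2$ must coincide up to transposition, so the effective sum has only $n^2$ rather than $n^3$ terms), the Ward identity $\sum_b\abs{G_{ab}}^2=\eta^{-1}\Im G_{aa}$, and the local-law bounds $\abs{G_{ab}}\lesssim n^\xi$ and $\abs{G_{aa}}\lesssim n^\xi((n\eta)^{-1}+\eta^{1/3}+n^{-1/4})$ from~\eqref{m scaling} and Proposition~\ref{prop:local law} together yield an integrand bound compatible with the target $\eta_0^{-3}$ uniformly in $\eta\in[\eta_0,T]$ and in $z$ within an $n^{-1/2}$-neighbourhood of $z_1,\dots,z_k$. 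Once this estimate on $\E\abs{\partial_t Z_t}$ is in place, integrating in $t$ from $t_1$ to $t_2$ and using $\int_{t_1}^{t_2} e^{-3t/2}\diff t\lesssim e^{-3t_1/2}-e^{-3t_2/2}$ yields~\eqref{eq:fingcr}.
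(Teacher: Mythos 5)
Your overall skeleton---Ito's lemma, cumulant expansion~\eqref{gen cum exp}, Leibniz rule distributing the three derivatives \(\partial_\alpha,\partial_{\beta_1},\partial_{\beta_2}\) over the \(k\) factors of \(Z_t\), and trivial power counting for fourth and higher order cumulants---matches the paper's proof, and your treatment of the ``mixed'' allocations (with once- and twice-differentiated \(I_3\)-factors) correctly anticipates the bounds of Lemma~\ref{lemma fixed alpha}, including the \(\partial_\eta G=\ii G^2\) integration-by-parts trick for the once-differentiated factor. However, there is a genuine gap in the case where all three derivatives land on the same \(I_3^{(j)}\), which is precisely where the paper introduces its key new idea.

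You claim that this case ``reduces directly to the single-trace bound~\eqref{eq:nextordterm}'' by integrating it over \(\eta\in[\eta_0,T]\). This fails for two reasons. First, a bookkeeping issue: including the rescaling \(\int_\C\abs{\Delta f^{(j)}_{z_j}}\diff z \sim n\norm{\Delta f^{(j)}}_1\), the resulting bound would be \(e^{-3t/2}n^\xi n^{-5/2}\eta_0^{-3}\norm{\Delta f^{(j)}}_1\), not \(n^{-7/2}\eta_0^{-3}\)---so you would land exactly on the target, not a factor \(n\) below it, leaving no margin. Second, and more seriously, \eqref{eq:nextordterm} is stated and proved only for \(n^{-1}\le\eta\le n^{-3/4}\), whereas the \(\eta\)-integral in \(I_3^{(j)}\) extends to \(T=n^{100}\). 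The entry-wise estimates underlying the proof of \eqref{eq:nextordterm}, in particular \(\abs{G_{aa}}\lesssim n^\xi/(n\eta)\), genuinely fail for \(\eta\gtrsim n^{-3/4}\), where instead \(G_{aa}\approx\widehat m\) with \(\abs{\widehat m}\sim\eta^{1/3}\). If you simply carry out Cauchy--Schwarz plus Ward on, say, \(\sum_{abc}\abs{G_{ca}G_{bb}G_{aa}G_{bc}}\) with \(\abs{G_{aa}},\abs{G_{bb}}\lesssim\abs{\widehat m}\), you get an integrand of order \(n^2\) throughout \(\eta\in[n^{-3/4},1]\), which after the prefactors \(n^{-3/2}\cdot n\cdot(2n)^{-1}\) yields a contribution \(\sim n^{1/2}\)---far above the target \(n^{\xi-1/4}\) when \(\eta_0=n^{-3/4}\). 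The paper's proof gets around this precisely by \emph{not} taking absolute values inside the \(a,b\)-sum after replacing diagonal \(G\)-entries by \(\widehat m\): it rewrites the surviving deterministic pieces as inner products \(\widehat m^2\braket{\bm1_\pm,G^2\bm1_\mp}\) and \(\widehat m\braket{e_a,G^2\bm1_{s(a)}}\) and bounds them via the \emph{isotropic} form of the local law (the bound \eqref{eq iso G^2} with the block vectors \(\bm1_\pm\) from \eqref{bm1pm}), gaining a full factor of \(n\) relative to the entry-wise estimate. This is the content of the \(J_3\) estimate \eqref{J3}--\eqref{J3 crit}, and it is the genuinely new technical ingredient of Lemma~\ref{prop:impnewflow}. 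Your write-up does not address the large-\(\eta\) regime and does not identify the need for the averaged/isotropic cancellation; without it the proof of the third-order term with all derivatives on a single factor does not close.
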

  Since \(Z_0=\prod_j I_3^{(j)}\) and \(Z_\infty=\prod_j \wt I_3^{(j)}\), the proof of Proposition~\ref{prop:mainprop} follows directly from~\eqref{eq:finbouprod}, modulo the proofs of Lemmata~\ref{lem:mainredux}--\ref{prop:impnewflow} that will be given in the next two subsections.
\end{proof}

\subsection{Proof of Lemma~\ref{lem:mainredux}}
In order to estimate the probability that there exists an eigenvalue of \(H^z\) very close to zero, we use the following proposition that has been proven in~\cite[Prop. 5.7]{MR3770875} adapting the proof of~\cite[Lemma 4.12]{MR2908617}.

\begin{proposition}\label{prop:verysmalleig}
  Under Assumption~\ref{as:smd} there exists a constant \(C>0\), depending only on \(\alpha\), such that
  \begin{equation}
    \label{eq:estvers}
    \Prob\left(\min_{i\in[2n]}\abs{\lambda_i^z}\le \frac{u}{n} \right)\le C u^\frac{2\alpha}{1+\alpha}n^{\beta+1},
  \end{equation}
  for all \(u>0\) and \(z\in \C  \).
\end{proposition}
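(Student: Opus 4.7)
The plan is to adapt the classical Rudelson--Vershynin/Tao--Vu distance-to-hyperplane strategy used in the cited references. Since $H^z$ is the Hermitization of $X-z$, its spectrum is $\pm\{s_k(X-z)\}_{k=1}^n$, so $\min_{i\in[2n]}\abs{\lambda_i^z}=s_{\min}(X-z)$. The standard geometric estimate
\begin{equation*}
  s_{\min}(X-z)\ge \frac{1}{\sqrt{n}}\min_{i\in[n]}\operatorname{dist}(R_i,V_i),
\end{equation*}
which follows from $\norm{A^{-1}}_{\mathrm{op}}\le \sqrt{n}\max_i\norm{A^{-1}e_i}$ together with $\norm{A^{-1}e_i}=1/\operatorname{dist}(R_i,V_i)$, where $R_i$ is the $i$-th row of $A=X-z$ and $V_i$ is the linear span of the remaining rows, reduces the task to bounding $\Prob(\operatorname{dist}(R_i,V_i)\le u/\sqrt{n})$ for each $i$ and applying a union bound over $i\in[n]$.

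For fixed $i$ I would condition on $V_i$, which is measurable with respect to $\{R_j\}_{j\ne i}$ and hence independent of $R_i$. Letting $w$ be any unit normal to $V_i$, one has $\operatorname{dist}(R_i,V_i)\ge\abs{R_i\cdot w}$, and $R_i\cdot w=n^{-1/2}\sum_j\chi_{ij}w_j-zw_i$ is an affine functional of the independent entries $\chi_{ij}$. Choosing an index $j_0$ with $\abs{w_{j_0}}\gtrsim n^{-1/2}$ (which exists since $\norm{w}=1$) and conditioning further on $\{\chi_{ij}\}_{j\ne j_0}$, the event $\{\abs{R_i\cdot w}\le s\}$ confines $\chi_{ij_0}$ in the complex case to a disk of area $\pi s^2 n/\abs{w_{j_0}}^2$. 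Hölder's inequality with exponents $1+\alpha$ and $(1+\alpha)/\alpha$ together with Assumption~\ref{as:smd} then produces an anti-concentration bound of schematic form $C(s^2 n^2)^{\alpha/(1+\alpha)}n^\beta$; setting $s=u/\sqrt{n}$, integrating over the conditioning, and union bounding over $i$ yields the $u^{2\alpha/(1+\alpha)}$ dependence and the polynomial $n$-prefactor claimed in~\eqref{eq:estvers}.

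The main obstacle is the real case, where the density of $\chi$ lives on $\R$ and a single-variable Hölder argument only produces the weaker exponent $\alpha/(1+\alpha)$. The resolution, which is the precise content of the adaptation of~\cite[Lemma~4.12]{MR2908617} carried out in~\cite[Prop.~5.7]{MR3770875}, is to condition on all but two coordinates $\chi_{ij_0},\chi_{ij_1}$ of $R_i$, chosen so that both $\abs{w_{j_\ast}}\gtrsim n^{-1/2}$ — which is possible because at most $o(n)$ coordinates of a unit vector in $\R^n$ can be much smaller than $n^{-1/2}$. The bound $\abs{R_i\cdot w}\le s$ then localises the pair $(\chi_{ij_0},\chi_{ij_1})\in\R^2$ to a strip of width $\sim s n^{1/2}$; a two-dimensional Hölder inequality against $g\otimes g$ on this strip (after truncating one coordinate to a bounded interval) restores the target exponent $2\alpha/(1+\alpha)$ with a constant depending only on $\alpha$. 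Apart from this two-variable localisation, all remaining steps are routine conditional integration and union bounds.
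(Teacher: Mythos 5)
The paper establishes this proposition by citation (\cite[Prop.~5.7]{MR3770875}, adapting \cite[Lemma~4.12]{MR2908617}), so there is no internal proof to compare against; your route via $s_{\min}(X-z)\ge n^{-1/2}\min_i\operatorname{dist}(R_i,V_i)$, conditioning on $V_i$, and H\"older against the $L^{1+\alpha}$ density is indeed the strategy of the cited arguments, and your complex-case computation is correct: the conditional localization of $\chi_{ij_0}$ to a disk of area $\sim s^2n$ together with H\"older and a union bound gives exactly $u^{2\alpha/(1+\alpha)}n^{\beta+1}$.

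In the real case, however, your two-variable fix has a genuine gap. As you describe it, the set for $(\chi_{ij_0},\chi_{ij_1})$ is a strip of width $\sim s\sqrt n$; after truncating the unconstrained coordinate to a bounded interval of length $K$, the area is $\sim Ks\sqrt n$ and two-dimensional H\"older against $g\otimes g$ produces $\norm{g}^2_{1+\alpha}(Ks\sqrt n)^{\alpha/(1+\alpha)}$, which still scales as $s^{\alpha/(1+\alpha)}$, not $s^{2\alpha/(1+\alpha)}$ --- truncating does not change the power of $s$. To obtain the square, the localization set must be comparable to an $s\sqrt n\times s\sqrt n$ parallelogram, i.e.\ both real constraints $\abs{\Re(R_i\cdot w)}\le s$ and $\abs{\Im(R_i\cdot w)}\le s$ must be effective on the chosen pair, which requires $w_{j_0}$ and $w_{j_1}$ to be $\R$-linearly independent as elements of $\C\cong\R^2$ with $\abs{\Im(w_{j_0}\overline{w_{j_1}})}\gtrsim 1/n$. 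The condition $\abs{w_{j_\ast}}\gtrsim n^{-1/2}$ alone does not guarantee this, and when $\Im z$ is small the unit normal $w\perp V_i$ can be nearly real, forcing every pair of coordinates of $w$ to be nearly $\R$-collinear in $\C$; then $(\chi_{ij_0},\chi_{ij_1})\mapsto w_{j_0}\chi_{ij_0}+w_{j_1}\chi_{ij_1}$ degenerates to essentially rank one and only a single effective constraint survives. Either the pair-selection step needs an actual non-degeneracy argument, which your sketch does not supply, or one must settle for the weaker exponent $u^{\alpha/(1+\alpha)}$ --- which, incidentally, would still suffice for every use of the proposition in this paper after enlarging the cutoff exponent $l$ in Lemmas~\ref{lem:aprioribound} and~\ref{lem:averbound}.
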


In the following lemma we prove a very high probability bound for \(I_1^{(j)}\), \(I_2^{(j)}\), \(I_3^{(j)}\), \(I_4^{(j)}\). The same bounds hold true for \(\widetilde{I}_1^{(j)}\), \(\widetilde{I}_2^{(j)}\), \(\widetilde{I}_3^{(j)}\), \(\widetilde{I}_4^{(j)}\) as well. These bounds in the bulk regime were already proven in~\cite[Proof of Theorem 2.5]{MR3770875}, the current edge regime is analogous, 
so we only provide a sketch of the proof for completeness.

\begin{lemma}\label{lem:aprioribound}
  For any \(j\in[k]\) the bounds
  \begin{equation}
    \label{eq:higprobbound}
    \abs{I_1^{(j)}}\le \frac{n^{1+\xi}\norm{\Delta f^{(j)}}_1}{T^2}, \quad \abs{I_2^{(j)}}+ \abs{I_3^{(j)}}\le n^\xi\norm[\big]{\Delta f^{(j)}}_1 , \quad \abs{I_4^{(j)}}\le \frac{n\norm{\Delta f^{(j)}}_1}{T},
  \end{equation}
  hold with very high probability for any \(\xi>0\). The bounds analogous to~\eqref{eq:higprobbound} also hold for \(\wt I_l^{(j)}\).
\end{lemma}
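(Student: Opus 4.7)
The plan is to treat each $I_\ell^{(j)}$ separately with the tool appropriate to its $\eta$-regime: the Hilbert--Schmidt bound on $H^z$, the asymptotic expansion of~\eqref{eq:defhatm}, the averaged local law of Proposition~\ref{prop:local law}, and the smallest-singular-value estimate of Proposition~\ref{prop:verysmalleig}. I will use the scaling identity $\norm{\Delta f_{z_j}^{(j)}}_1 = n\norm{\Delta f^{(j)}}_1$ throughout, so that the bound on $I_2^{(j)}+I_3^{(j)}$ reduces to showing, uniformly in $z$ in the $n^{-1/2}$-neighborhood of the unit circle, that
\[
\int_0^{T} \abs[\big]{\braket{\Im G^z(\ii\eta)}-\Im\widehat m^z(\ii\eta)}\diff \eta \lesssim n^{-1+\xi}.
\]

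For $I_1^{(j)}$ I would expand $\log\abs{\det(H^z-\ii T)} = 2n\log T + \tfrac12\sum_i \log(1+(\lambda_i^z/T)^2)$. The constant $2n\log T$ is killed by $\int_\C \Delta f_{z_j}^{(j)}(z)\diff z=0$, and the remainder is bounded by $\sum_i (\lambda_i^z)^2/T^2\lesssim n/T^2$ with very high probability. For $I_4^{(j)}$ the argument is purely deterministic; substituting $\widehat m^z(\ii\eta)=\ii/\eta+\landauO{\eta^{-3}}$ into~\eqref{eq:defhatm} gives $\Im\widehat m^z(\ii\eta)-1/(\eta+1)=\landauO{\eta^{-2}}$, whose integral on $[T,\infty)$ is $\landauO{1/T}$. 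For $I_3^{(j)}$ I would invoke Proposition~\ref{prop:local law} with $R=I$; since the diagonal blocks of $M^z(\ii\eta)$ are $\widehat m^z(\ii\eta)I$, this yields $\abs{\braket{\Im G^z(\ii\eta)}-\Im\widehat m^z(\ii\eta)}\le n^\xi/(n\eta)$ uniformly in the relevant $z$ and $\eta\in[\eta_0,T]$, and $\int_{\eta_0}^T n^\xi/(n\eta)\diff\eta$ is only logarithmically larger than $n^{-1+\xi}$, which is absorbed into a slightly larger $n^\xi$.

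The main obstacle is $I_2^{(j)}$, because the local law is unavailable below $\eta=n^{-1}$ and $\braket{\Im G^z(\ii\eta)}$ has apparent singularities at the eigenvalues of $H^z$. I would split the $\eta$-integral at $\eta_c:= n^{-1}$, handling $[\eta_c,\eta_0]$ by the local-law argument used for $I_3^{(j)}$, and handling $[0,\eta_c]$ via the spectral identity
\[
\int_0^{\eta_c}\braket{\Im G^z(\ii\eta)}\diff\eta = \frac{1}{4n}\sum_i \log\bigl(1+\eta_c^2/(\lambda_i^z)^2\bigr),
\]
split at the threshold $\abs{\lambda_i^z}=\eta_c$. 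The piece with $\abs{\lambda_i^z}>\eta_c$ is handled by $\log(1+x)\le x$ together with $\sum_i \eta_c/((\lambda_i^z)^2+\eta_c^2) = 2n\braket{\Im G^z(\ii\eta_c)}\lesssim n^\xi/\eta_c$ coming from the local law at $\eta=\eta_c$, contributing $\landauO{n^{-1+\xi}}$. The piece with $\abs{\lambda_i^z}\le\eta_c$ requires combining the very-high-probability lower bound $\min_i\abs{\lambda_i^z}\ge n^{-C}$ from Proposition~\ref{prop:verysmalleig} (which makes each summand $\landauO{\log n}$) with the bound on the number of such eigenvalues from Proposition~\ref{prop:lt}; the deterministic counterpart $\int_0^{\eta_c}\Im\widehat m^z(\ii\eta)\diff\eta\lesssim n^{-5/4}$ is clearly negligible. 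The technical bottleneck I expect is squeezing enough decay out of Propositions~\ref{prop:verysmalleig} and~\ref{prop:lt} to beat the $\log n$ prefactor without sacrificing the $n^\xi$-factor in the claim.
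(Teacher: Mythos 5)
Your treatment of $I_1^{(j)}$, $I_3^{(j)}$ and $I_4^{(j)}$ is essentially the paper's argument (with the small cosmetic addition that you kill the constant $2n\log T$ via $\int\Delta f=0$ rather than just bounding the whole thing). The issue is in $I_2^{(j)}$, specifically in the piece with $\abs{\lambda_i^z}\le\eta_c$.

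You propose to bound the \emph{number} of eigenvalues below $\eta_c=n^{-1}$ by Proposition~\ref{prop:lt} and then multiply by the $\log n$ per--eigenvalue bound from Proposition~\ref{prop:verysmalleig}. This step has two independent problems. First, Proposition~\ref{prop:lt} is an \emph{expectation} bound, \(\E\abs{\{i:\abs{\lambda_i^z}\le\eta\}}\lesssim\cdots\). It does not give any very-high-probability control on the count, and hence cannot feed into a statement that is supposed to hold with very high probability; at best Markov turns it into a polynomially small failure probability, which is not what the lemma asserts. In the paper Proposition~\ref{prop:lt} is used only for Lemma~\ref{lem:averbound}, the \emph{improved expectation} bound on $I_2^{(j)}$; the high-probability bound here does not invoke it. Second, even as an expectation bound, Proposition~\ref{prop:lt} is vacuous at your chosen scale $\eta_c=n^{-1}$: the error term there is $\landauO{n^\xi/(n^{5/2}\eta^3)}$, which at $\eta=n^{-1}$ is of order $n^{1/2+\xi}$, much larger than the trivial bound. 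The right tool is the one the paper uses in~\eqref{eq:nottgb}: for any $\eta'\in[n^{-1},n^{-3/4}]$ the averaged local law~\eqref{eq local law} together with the scaling~\eqref{m scaling} gives, with very high probability,
\[
\abs[\big]{\{i:\abs{\lambda_i^z}\le\eta'\}}\le\sum_i\frac{2(\eta')^2}{(\lambda_i^z)^2+(\eta')^2}
=4n\eta'\braket{\Im G^z(\ii\eta')}\lesssim n\eta'\Bigl((\eta')^{1/3}+n^{-1/4}+\frac{n^\xi}{n\eta'}\Bigr)\lesssim n^\xi,
\]
which is precisely the very-high-probability count your argument is missing. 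With this substitution the rest of your $I_2$ analysis (the $\log(1+x)\le x$ step for $\abs{\lambda_i^z}>\eta_c$ via the local law at $\eta_c$, the Proposition~\ref{prop:verysmalleig} lower bound on $\min_i\abs{\lambda_i^z}$, and the deterministic $\int_0^{\eta_c}\Im\widehat m$ estimate) goes through and recovers the claimed bound; the ``technical bottleneck'' you flagged disappears because the count is then $n^\xi$, so the extra $\log n$ is harmless.
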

\begin{proof}
  For notational convenience we do not carry the \(j\)-dependence of \(I_l^{(j)}\) and \(f^{(j)}\), and the dependence of \(\lambda_i,H,G,M,\widehat m\) on \(z\) within this proof. Using that 
  \[
  \log \abs{\det (H -\ii T)}= 2n\log T+\sum_{j\in[n]}\log \left( 1+\frac{\lambda_j^{2}}{T^2}\right),
  \]
  we easily estimate \(\abs{I_1}\) as follows
  \[
  \begin{split}
    \abs{I_1} &=\abs*{ \frac{1}{4\pi n}\int_\C   \Delta f_{z_j}(z)\log\abs{\det (H-\ii T)} \diff z }\\&\lesssim \frac{1}{n}\int_\C   \abs{\Delta f_{z_j}(z)}\frac{\Tr  H^2}{T^2} \diff z\lesssim \frac{n^{1+\xi}\norm{\Delta f}_1}{T^2},
  \end{split}
  \]
  for any \(\xi>0\) with very high probability owing to the high moment bound~\eqref{eq:boundmom}. 
  By~\eqref{eq:defhatm} it follows that \(\abs{\Im \wh m^z(\ii\eta)-(\eta+1)^{-1}}\sim \eta^{-2}\) for large \(\eta\), proving also the bound on \(I_4\) in~\eqref{eq:higprobbound}. The bound for \(I_3\) follows immediately from the averaged local law in~\eqref{eq local law}. 
  
  For the \(I_2\) estimate we split the \(\eta\)-integral of \(\Im m^z(\ii \eta)-\Im \widehat{m}^z(\ii\eta)\) in \(I_2\) as follows
  \begin{align}
    \label{eq:split}
    &  \int_0^{\eta_0} \Im \braket{G^z(\ii\eta)-M^z(\ii\eta)} \diff \eta \\
    &\qquad =\frac{1}{n}\sum_{\abs{\lambda_i}< n^{-l}}\log\left( 1+\frac{\eta_0^2}{\lambda_i^2}\right) +\frac{1}{n} \sum_{\abs{\lambda_i}\ge n^{-l}}\log\left( 1+\frac{\eta_0^2}{\lambda_i^2}\right)-\int_0^{\eta_0} \Im \widehat{m}^z(\ii\eta) \diff \eta,\nonumber
  \end{align}
  where \(l\in \N\) is a large fixed integer. Using~\eqref{m scaling} we find that the third term in~\eqref{eq:split} is bounded by \(n^{-1-\delta}\). Choosing \(l\) large enough, it follows, as in~\cite[Eq.~(5.35)]{MR3770875}, using the bound~\eqref{eq:estvers} that
  \begin{equation}
    \label{eq:hopgd}
    \frac{1}{n}\sum_{\abs{\lambda_i}< n^{-l}}\log\left( 1+\frac{\eta_0^2}{\lambda_i^2}\right)\le n^{-1+\xi},
  \end{equation}
  with very high probability for any \(\xi>0\). Alternatively, this bound also follows from~\eqref{eq:tv}
  without Assumption~\ref{as:smd}, circumventing Proposition~\ref{prop:verysmalleig}, see Remark~\ref{remm:tv}.
  For the second term in~\eqref{eq:split} we define \(\eta_1:= n^{-3/4+\xi}\) with some very small \(\xi>0\) and using \(\log (1+x)\le x\) we write
  \begin{equation}\label{eq:usefsplit}
    \begin{split}
      \sum_{\abs{\lambda_i}\ge n^{-l}}\log\left( 1+\frac{\eta_0^2}{\lambda_i^2}\right)&=  \sum_{ n^{-l}\le \abs{\lambda_i}\le n^{\delta/2} \eta_0}\log\left( 1+\frac{\eta_0^2}{\lambda_i^2}\right)+
      \eta_0^2 \sum_{\abs{\lambda_i}\ge n^{\delta/2}\eta_0} \frac{1}{\lambda_i^2} \\
      &\lesssim \abs{\set{i\given\abs{\lambda_i}< n^{\delta/2}\eta_0}} \cdot \log n + \eta_0^2 \sum_{\abs{\lambda_i}\ge n^{\delta/2}\eta_0} \frac{1}{\lambda_i^2} \\
      &\lesssim (\log n) n^{4\xi/3} + \frac{\eta_0^2 n^{\delta + 2\xi}}{\eta_1}\sum_{\abs{\lambda_i}\ge n^{\delta/2}\eta_0} 
      \frac{\eta_1}{\lambda_i^2+\eta_1^2} \\
      &\lesssim  (\log n) n^{4\xi/3}+ n^{1-\delta}\eta_1 \braket{\Im G^z(\ii\eta_1)} \le    n^{2\xi} + n^{-\delta+2\xi}
    \end{split}
  \end{equation}
  by the averaged local law in~\eqref{eq local law}, and \(\braket{\Im M^z(\ii \eta_1)} \lesssim  \eta_1^{1/3}\)
  from~\eqref{m scaling}.
  Here  from the second to third line in~\eqref{eq:usefsplit} we used that
  \begin{equation}
    \label{eq:nottgb}
    \abs{\set{i\given\abs{\lambda_i}\le n^{\delta/2}\eta_0}}\le\sum_i \frac{\eta_1^2}{\lambda_i^2 + \eta_1^2}  =n\eta_1 \braket{\Im G^z(\ii\eta_1)}\le n^{4\xi/3},
  \end{equation}
  again by the local law. By redefining \(\xi\), 
  this concludes the high probability bound on \(I_2\) in~\eqref{eq:higprobbound}, and thereby the proof of the lemma. 
\end{proof}
In the following lemma we prove an improved bound for \(I_2^{(j)}\), compared with~\eqref{eq:higprobbound}, which holds true only in expectation. The main input of the following lemma is the stronger lower tail estimate on \(\lambda_i\), in the regime \(\abs{\lambda_i}\ge n^{-l}\), from~\eqref{eq:impsmallbound} instead of~\eqref{eq:nottgb}.
\begin{lemma}\label{lem:averbound}
  Let \(I_2^{(j)}\) be defined in~\eqref{eq:defGirform}, then
  \begin{equation}
    \label{eq:carfbound}
    \E  \abs[\big]{I_2^{(j)}}\lesssim n^{-\delta/3} \lVert \Delta f^{(j)} \rVert_1, 
  \end{equation}
  for any \(j\in \{1,\dots, k\}\).
\end{lemma}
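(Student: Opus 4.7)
The plan is to reduce $\E\abs{I_2^{(j)}}$ to a pointwise-in-$z$ bound. Writing
\[
S(z):=\int_0^{\eta_0}\bigl[\braket{\Im G^z(\ii\eta)}-\Im\wh m^z(\ii\eta)\bigr]\,\diff\eta,
\]
the triangle inequality together with the change-of-variables identity $\int_{\C}\abs{\Delta f^{(j)}_{z_j}(z)}\,\diff z = n\norm{\Delta f^{(j)}}_1$ reduces the task to showing $\E\abs{S(z)}\lesssim n^{-1-\delta/3}$, uniformly for $z$ in the relevant $n^{-1/2}$-neighborhood. Evaluating the $\eta$-integral in closed form gives
\[
\int_0^{\eta_0}\braket{\Im G^z(\ii\eta)}\,\diff\eta \;=\; \frac{1}{4n}\sum_{i=1}^{2n}\log\Bigl(1+\frac{\eta_0^2}{(\lambda_i^z)^2}\Bigr),
\]
while the deterministic subtraction obeys $\int_0^{\eta_0}\Im\wh m^z(\ii\eta)\,\diff\eta\lesssim n^{-1/4}\eta_0+\eta_0^{4/3}\lesssim n^{-1-\delta}$ by~\eqref{m scaling} and the choice $\eta_0=n^{-3/4-\delta}$. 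Hence it remains to bound the expectation of the logarithmic sum by $n^{-1-\delta/3}$.

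I would then split the sum according to the size of $\abs{\lambda_i}$ and treat the three regimes separately. \emph{(i) Very small eigenvalues, $\abs{\lambda_i}<n^{-l}$ for a large $l=l(\alpha,\beta)$:} Proposition~\ref{prop:verysmalleig} shows this set is empty with probability $\ge 1-n^{-D}$ for arbitrarily large $D$, and on the complementary event Cauchy–Schwarz combined with the full tail from~\eqref{eq:estvers} controls $\E\log(1/\abs{\lambda_{\min}})^2$ by a polynomial in $n$; the overall contribution is thus negligible. \emph{(ii) Intermediate eigenvalues, $n^{-l}\le\abs{\lambda_i}\le n^{\delta/2}\eta_0$:} here $\log(1+\eta_0^2/\lambda_i^2)\lesssim \log n$ and Proposition~\ref{prop:lt} at the scale $\eta=n^{\delta/2}\eta_0=n^{-3/4-\delta/2}$ yields $\E\abs{\{i:\abs{\lambda_i}\le\eta\}}\lesssim n^{-\delta/2}\log n$ provided $\delta$ is small enough that the error $n^{\xi}/(n^{5/2}\eta^3)=n^{\xi-1/4+3\delta/2}$ is swallowed by the leading term; multiplication by the log-factor and $1/n$ gives a contribution $\lesssim n^{-1-\delta/2}(\log n)^2$. \emph{(iii) Large eigenvalues, $\abs{\lambda_i}>n^{\delta/2}\eta_0$:} use $\log(1+x)\le x$ and the elementary comparison
\[
\sum_{\abs{\lambda_i}>\eta_1}\frac{1}{\lambda_i^2}\;\le\; \frac{4n}{\eta_1}\braket{\Im G^z(\ii\eta_1)},\qquad \eta_1:=n^{\delta/2}\eta_0,
\]
combined with Proposition~\ref{prop:local law} and~\eqref{m scaling} to deduce $\E\braket{\Im G^z(\ii\eta_1)}\lesssim n^{-1/4}+n^{\xi}/(n\eta_1)\lesssim n^{\xi-1/4+\delta/2}$. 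Multiplying by $\eta_0^2/\eta_1=n^{-\delta/2}\eta_0$ gives a contribution $\lesssim n^{-1-\delta+\xi}$, which is $\lesssim n^{-1-\delta/3}$ once $\xi<2\delta/3$.

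The main obstacle is the tension in Proposition~\ref{prop:lt} between the leading term $\sim n^{3/2}\eta^2\log n$ and the error $n^{\xi}/(n^{5/2}\eta^3)$: the error grows as $\eta$ shrinks, so one cannot push the splitting scale too close to the hard edge $n^{-1}$. This forces a balancing act, which is why we choose the intermediate threshold at $n^{-3/4-\delta/2}$ rather than the natural edge scale $n^{-3/4}$, and why the exponent $-\delta/3$ in the target is strictly weaker than the bare gain $-\delta/2$ obtained from the main term—precisely to absorb the error and the accumulated $\log$ factors. A secondary subtlety is that Proposition~\ref{prop:local law} is a with-very-high-probability statement, but its upgrade to an expectation bound in regime (iii) is routine via the deterministic a priori estimate $\braket{\Im G^z(\ii\eta_1)}\le 1/\eta_1$ on the exceptional event of probability $\le n^{-D}$.
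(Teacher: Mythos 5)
Your proposal is correct and follows essentially the same path as the paper: split the logarithmic sum $\frac{1}{n}\sum_i\log(1+\eta_0^2/\lambda_i^2)$ into the regimes $\abs{\lambda_i}<n^{-l}$ (controlled via Proposition~\ref{prop:verysmalleig}), $n^{-l}\le\abs{\lambda_i}\le n^{\delta/2}\eta_0$ (controlled via Proposition~\ref{prop:lt}, which is the decisive improvement over the high-probability Lemma~\ref{lem:aprioribound}), and $\abs{\lambda_i}>n^{\delta/2}\eta_0$ (controlled via $\log(1+x)\le x$ and the averaged local law of Proposition~\ref{prop:local law}). The only deviations are cosmetic: you use Cauchy--Schwarz rather than the paper's layer-cake integral $\int_{l\log n}^\infty\Prob(\lambda_1\le e^{-t})\diff t$ in the first regime, and you instantiate the local law directly at $\eta_1=n^{\delta/2}\eta_0$ rather than at the paper's $\eta_1=n^{-3/4+\xi}$ in the third regime; both variants close with the same margin.
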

\begin{proof}
  We split the \(\eta\)-integral of \(\Im m^z(\ii \eta)-\Im \widehat{m}^z(\ii\eta)\) as in~\eqref{eq:split}. The third term in the r.h.s.\ of~\eqref{eq:split} is of order \(n^{-1-4\delta/3}\). Then, we estimate the first term in the r.h.s.\ of~\eqref{eq:split} as
  \begin{align}
    \label{eq:estsmeig1}
    \E  \left[ \frac{1}{n}\sum_{\abs{\lambda_i}< n^{-l}} \log\left( 1+\frac{\eta_0^2}{\lambda_i^2}\right)\right] &\le \E  \left[ \log\left( 1+\frac{\eta_0^2}{\lambda_1^2}\right) \1(\lambda_1\le n^{-l})\right]\\
    &\lesssim \E [\abs{\log \lambda_1} \1(\lambda_1\le n^{-l})] \nonumber\\
    &=\int_{l\log n}^{+\infty}\Prob(\lambda_1\le e^{-t}) \, \diff t\lesssim n^{\beta+1+\frac{2\alpha}{1+\alpha}} e^{-\frac{2\alpha l}{1+\alpha}},\nonumber
  \end{align}
  where in the last inequality we use~\eqref{eq:estvers} with \(u=e^{-t} n\). Note that by~\eqref{eq:impsmallbound} it follows that
  \begin{equation}
    \label{eq:needs}
    \E \abs[\big]{ \{i:\abs{\lambda_i}\le n^{\delta/2}\eta_0\} }\lesssim n^{-\delta/2}.
  \end{equation}
  Hence, by~\eqref{eq:needs}, using similar computations to~\eqref{eq:usefsplit}, we conclude that
  \begin{equation}
    \label{eq:estsmeig2}
    \E \left[\frac{1}{n} \sum_{\abs{\lambda_i}\ge n^{-l}}\log\left( 1+\frac{\eta_0^2}{\lambda_i^2}\right)\right]\lesssim \frac{\log n}{n^{1+\delta/2}}.
  \end{equation}
  Note that the only difference to prove~\eqref{eq:estsmeig2} respect to~\eqref{eq:usefsplit} is that the first term in the first line of the r.h.s.\ of~\eqref{eq:usefsplit} is estimated using~\eqref{eq:needs} instead of~\eqref{eq:nottgb}. Finally, choosing \(l\ge \alpha^{-1} (3+\beta)(1+\alpha)+2\), and combining~\eqref{eq:estsmeig1},~\eqref{eq:estsmeig2} we conclude~\eqref{eq:carfbound}. 
\end{proof}

Equipped with Lemmata~\ref{lem:aprioribound}--\ref{lem:averbound}, we now present the proof of Lemma~\ref{lem:mainredux}.
\begin{proof}[Proof of Lemma~\ref{lem:mainredux}]
  Using the definitions for \(I_1^{(j)}, I_2^{(j)},I_3^{(j)}, I_4^{(j)}\) in~\eqref{eq:defGirform}, and similar definitions for \(\widetilde{I}_1^{(j)}, \widetilde{I}_2^{(j)},\widetilde{I}_3^{(j)}, \widetilde{I}_4^{(j)}\), we conclude that
  \[
  \begin{split}
    &\E \left[ \prod_{j=1}^k\left(\frac{1}{n}\sum_{i=1}^n f_{z_j}^{(j)}(\sigma_i)- \frac{1}{\pi}\int_\DD  f_{z_j}^{(j)}(z) \diff z \right)- 
    \prod_{j=1}^k\left(\frac{1}{n}\sum_{i=1}^n f_{z_j}^{(j)}(\widetilde{\sigma}_i)- \frac{1}{\pi}\int_\DD  f_{z_j}^{(j)}(z) \diff z \right) \right] \\
    &\qquad =  \E \left[ \prod_{j=1}^k\left(I_1^{(j)}+ I_2^{(j)}+I_3^{(j)}+ I_4^{(j)}\right)- \prod_{j=1}^k\left(\widetilde{I}_1^{(j)}+ \widetilde{I}_2^{(j)}+\widetilde{I}_3^{(j)}+ \widetilde{I}_4^{(j)}\right)\right] \\
    &\qquad =  \E \left[ \prod_{j=1}^k I_3^{(j)}-\prod_{j=1}^k \widetilde{I}_3^{(j)}\right]+\sum_{\substack{j_1+j_2+j_3+j_4=k, \\ j_i\ge 0, \, j_3<k}} \E  \prod_{\substack{i_l=1, \\ l=1,2,3,4}}^{j_l} I_1^{(i_1)} I_2^{(i_2)} I_3^{(i_3)} I_4^{(i_4)} \\
    &\qquad\quad -\sum_{\substack{j_1+j_2+j_3+j_4=k, \\ j_i\ge 0, \, j_3<k}} \E  \prod_{\substack{i_l=1, \\ l=1,2,3,4}}^{j_l} \widetilde{I}_1^{(i_1)} \widetilde{I}_2^{(i_2)} \widetilde{I}_3^{(i_3)} \widetilde{I}_4^{(i_4)}.
  \end{split}
  \]
  Then, if \(j_2\ge 1\), by Lemma~\ref{lem:aprioribound} and Lemma~\ref{lem:averbound}, using that \(T=n^{100}\) in the definition of \(I_1^{(j)},\dots, I_4^{(j)}\) in~\eqref{eq:defGirform}, it follows that
  \[
  \E  \prod_{\substack{i_l=1, \\ l=1,2,3,4}}^{j_l} I_1^{(i_1)} I_2^{(i_2)} I_3^{(i_3)} I_4^{(i_4)}\lesssim \frac{n^{j_1+j_4} n^{(k-j_4-1)\xi} \ \prod_{j=1}^k \Vert\Delta f^{(j)}\rVert_1}{n^{\delta/3} T^{2j_1+j_4}}\le n^{-c_2(k,\delta)},
  \]
  for any \(j_1,j_3, j_4 \ge 0\), and a small constant \(c(_2k,\delta)>0\) which only depends on \(k, \delta\). If, instead, \(j_2=0\), then at least one among \(j_1\) and \(j_4\) is not zero, since \(0\le j_3\le k-1\) and \(j_1+j_2+j_3+j_4=k\). Assume \(j_1\ge 1\), the case \(j_4\ge 1\) is completely analogous, then
  \[
  \E  \prod_{\substack{i_l=1, \\ l=1,2,3,4}}^{j_l} I_1^{(i_1)} I_2^{(i_2)} I_3^{(i_3)} I_4^{(i_4)}\lesssim \frac{n^{j_1+j_4} n^{(k-j_4)\xi} 
  \prod_{j=1}^k \lVert \Delta f^{(j)}\rVert_1}{T^{2j_1+j_4}} \le n^{-c_2(k,\delta)}.
  \]
  Since similar bounds hold true for \(\widetilde{I}_1^{(i_1)}, \widetilde{I}_2^{(i_2)}, \widetilde{I}_3^{(i_3)}, \widetilde{I}_4^{(i_4)}\) as well, the above inequalities conclude the proof of~\eqref{eq:easier}.
\end{proof}

\subsection{Proof of Lemma~\ref{prop:impnewflow}}
We begin with a lemma generalizing the bound in~\eqref{eq:higprobbound} to derivatives of \(I_3^{(j)}\). 
\begin{lemma}\label{lemma fixed alpha}Assume \(n^{-1}\le \eta_0\le n^{-3/4}\) and fix \(l\ge 0\), \(j\in[k]\) and a double index  \(\alpha=(a,b)\) such that \(a\neq b\). Then, for any choice of \(\gamma_i\in\{\alpha,\alpha'\}\) and any \(\xi>0\) we have the bounds
  \begin{equation}\label{fixed alpha bounds}
    \abs{ \partial_\gamma^l I_3^{(j)}(t) }\lesssim \norm[\big]{\Delta f^{(j)}}_1  n^\xi \biggl( \frac{1}{(n\eta_0)^{\min\{l,2\}}} + \1\bigl(a\equiv b+n\pmod{2n}\bigr) \biggr),
  \end{equation}
  where \(\partial_\gamma^l:=\partial_{\gamma_1}\dots\partial_{\gamma_l}\), 
  with very high probability uniformly in \(t\ge 0\). 
\end{lemma}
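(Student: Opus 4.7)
The plan is to exploit a closed-form evaluation of the $\eta$-integral inside $I_3^{(j)}(t)$. Writing $\Im\braket{G_t^z(\ii\eta)}=(2n)^{-1}\sum_i \eta/((\lambda_i^z(t))^2+\eta^2)$ and integrating term-by-term yields
\[
\int_{\eta_0}^T \Im\braket{G_t^z(\ii\eta)}\diff\eta = \frac{1}{2n}\log\frac{\abs{\det(H_t^z-\ii T)}}{\abs{\det(H_t^z-\ii\eta_0)}}.
\]
Since $M^z(\ii\eta)$ is deterministic, $\partial_\gamma M^z=0$, so for $l\ge1$
\[
\partial_\gamma^l I_3^{(j)}(t) = -\frac{1}{4\pi n}\int_\C \Delta f_{z_j}^{(j)}(z)\,\partial_\gamma^l\bigl[\log\abs{\det(H_t^z-\ii T)}-\log\abs{\det(H_t^z-\ii\eta_0)}\bigr]\diff z.
\]
It thus suffices to bound $\partial_\gamma^l \log\abs{\det(H_t^z-\ii s)}$ for $s\in\{\eta_0,T\}$.

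Using the Hermitian identity $2\log\abs{\det(H-\ii s)} = \log\det(H-\ii s)+\log\det(H+\ii s)$ and the classical formula $\partial_\alpha \log\det(H-w)=G_{ba}(w)$ for $\alpha=(a,b)$, one obtains
\[
\partial_\alpha \log\abs{\det(H-\ii s)} = \tfrac{1}{2}\bigl(G_{ba}(\ii s)+G_{ba}(-\ii s)\bigr),
\]
and analogously $\partial_{\alpha'}\log\abs{\det(H-\ii s)} = \tfrac{1}{2}(G_{ab}(\ii s)+G_{ab}(-\ii s))$. Since each derivative acts by $\partial_\gamma G_{ij}=-G_{ic}G_{dj}$ with $(c,d)\in\{(a,b),(b,a)\}$, a straightforward induction on $l$ shows that $\partial_\gamma^l\log\abs{\det(H-\ii s)}$ is a linear combination of at most $l!$ products of total degree $l$ in the four Green-function entries $G_{ij}(\pm\ii s)$, $i,j\in\{a,b\}$.

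From the isotropic local law in Proposition~\ref{prop:local law}, uniformly in $t\ge0$ (the OU flow keeps the matrix in the same class), and for $s\in\{\eta_0,T\}$ with $n^{-1}\le\eta_0\le n^{-3/4}$,
\[
\abs{G^z_{t,ij}(\ii s)-M^z_{ij}(\ii s)} \lesssim n^\xi\Bigl(\frac{1}{n^{1/2}s^{1/3}}+\frac{1}{ns}\Bigr)\lesssim \frac{n^\xi}{n\eta_0}
\]
with very high probability. The structure of $M$ in~\eqref{M matrix} together with~\eqref{m scaling} yields $\abs{M_{aa}}=\abs{M_{bb}}=\abs{\wh m(\ii s)}\lesssim n^{-1/4}\lesssim n^\xi/(n\eta_0)$, while $M_{ab}=M_{ba}=0$ unless $a\equiv b+n\pmod{2n}$, in which case $\abs{M_{ab}}=\abs{M_{ba}}=\abs{z}\cdot\abs{u}=O(1)$. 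Combining, every relevant $\abs{G^z_{t,ij}(\ii s)}$ is bounded by $n^\xi/(n\eta_0)+\mathbf{1}(a\equiv b+n\pmod{2n})$, and at $s=T=n^{100}$ all entries are $O(T^{-1})$, which is negligible. Substituting into the polynomial bound yields
\[
\abs{\partial_\gamma^l I_3^{(j)}(t)} \lesssim \frac{\norm{\Delta f^{(j)}}_1\,l!}{n}\Bigl(\frac{n^\xi}{n\eta_0}+\mathbf{1}(a\equiv b+n\pmod{2n})\Bigr)^l.
\]
Expanding the binomial and using $n\eta_0\ge 1$ — which gives $(n\eta_0)^{-l}/n\le (n\eta_0)^{-\min\{l,2\}}$ for $l\ge1$ — and then absorbing $l!\cdot n^{l\xi}$ into a redefined $\xi$, we obtain~\eqref{fixed alpha bounds} for $l\ge1$. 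The case $l=0$ follows directly from the averaged local law in~\eqref{eq local law}: $\abs{I_3^{(j)}}\lesssim \norm{\Delta f^{(j)}}_1 \int_{\eta_0}^T n^\xi/(n\eta)\diff\eta\lesssim n^\xi\norm{\Delta f^{(j)}}_1$ after redefining $\xi$.

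The main conceptual ingredient is the log-det closed-form evaluation, which reduces the analysis to derivatives of $\log\abs{\det(H-\ii s)}$ at the two endpoints $s\in\{\eta_0,T\}$; the key observation is that, by the structure of $\partial_\gamma G_{ij}$, every such derivative localises on the $2\times 2$ index set $\{a,b\}^2$, and hence only four specific Green function entries enter. The only technical subtlety is controlling these four entries by the isotropic local law uniformly in $t\ge 0$ and carefully separating the exceptional off-diagonal positions $a\equiv b+n\pmod{2n}$, where $M$ itself carries an $O(1)$ contribution from $u$, from all other positions.
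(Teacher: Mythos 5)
Your proof is correct and takes a genuinely different route from the paper's. The paper's argument keeps the $\eta$-integral and the free trace index coming from $\braket{G\Delta^{\gamma_1}G\cdots}$, and estimates it with Ward identities and Cauchy--Schwarz; for $l\ge 2$ it further splits the diagonal resolvent entries as $\wh m + \landauO{\psi}$ and integrates by parts against powers of $\wh m$. You short-circuit all of this: you evaluate the $\eta$-integral in closed form as $\frac{1}{2n}\bigl(\log\abs{\det(H_t^z-\ii T)}-\log\abs{\det(H_t^z-\ii\eta_0)}\bigr)$, note that $\partial_\gamma M=0$, and then observe that every $\partial_\gamma^l$ of the log-determinant localises onto the four entries $G_{ij}(\pm\ii s)$ with $i,j\in\{a,b\}$ (via the chain rule, $\partial_\gamma\log\abs{\det(H-\ii s)}=\tfrac12\bigl(G_{ba}(\ii s)+G_{ba}(-\ii s)\bigr)$ and $\partial_\gamma G_{ij}=-G_{ic}G_{dj}$ preserves the $\{a,b\}$-index set), each controlled directly by the isotropic local law at the two endpoints $s\in\{\eta_0,T\}$. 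This turns the estimate into pure algebra plus a $4$-entry local-law bound and yields the sharper decay $(n\eta_0)^{-l}$ in place of the paper's $(n\eta_0)^{-\min\{l,2\}}$, which is merely what is needed downstream. One bookkeeping slip: the factor $1/n$ in your final display should not be there. Since $\int_\C\abs[\big]{\Delta f_{z_j}^{(j)}(z)}\diff z=n\norm[\big]{\Delta f^{(j)}}_1$, this $n$ cancels exactly against the $1/(4\pi n)$ prefactor from Girko's formula, and the correct estimate is $\abs{\partial_\gamma^l I_3^{(j)}(t)}\lesssim\norm[\big]{\Delta f^{(j)}}_1\,l!\,n^\xi\bigl((n\eta_0)^{-1}+\1(a\equiv b+n)\bigr)^l$; since your claimed bound is (erroneously) even stronger, the lemma still follows, but the displayed constant is wrong.
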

\begin{proof} 
  We omit the \(t\)- and \(z\)-dependence of \(G_t^z\), \(\wh m^z\) within this proof since all bounds hold uniformly in \(t\ge 0\) and \(\abs{z-z_j}\lesssim n^{-1/2}\). 
  We also omit the \(\eta\)-argument from these functions, but the \(\eta\)-dependence of  all estimates will explicitly be indicated.
  Note that the \(l=0\) case was already proven in~\eqref{eq:higprobbound}. We now separately consider the remaining cases \(l=1\) and \(l\ge 2\). For notational simplicity we neglect the \(n^\xi\) multiplicative error factors (with arbitrarily small exponents \(\xi>0\)) applications of the local law~\eqref{eq local law} within the proof. In particular we will repeatedly use~\eqref{eq local law} in the form
  \begin{equation}\label{G local law bounds}
    \begin{split}
      \abs{G_{ba}} &\lesssim \begin{cases} 1, & a\equiv b+n\pmod{2n},\\ \psi, & a\not\equiv b+n \pmod{2n},\end{cases} \quad G_{bb}=\wh m + \landauO{\psi},\\
      \abs{\wh m}&\lesssim \min\{1,\eta^{1/3}+n^{-1/4}\},
    \end{split}  
  \end{equation}
  where we defined the parameter \[\psi:= \frac{1}{n\eta}+\frac{1}{n^{1/2}\eta^{1/3}}.\]
  \subsubsection*{Case \texorpdfstring{\(l=1\)}{l=1}} This follows directly from 
  \[ \begin{split}
    \abs*{\int_{\eta_0}^T \braket{G \Delta^{ab} G}\diff\eta}&= \abs{*\frac{1}{n} \int_{\eta_0}^T G^2_{ba}\diff\eta} = \frac{\abs{G(\ii T)_{ab}-G(\ii \eta_0)_{ab}}}{n}\\
    &\lesssim \frac{1}{n^2\eta_0} + \frac{1}{n}\1\bigl(a\equiv b+n\pmod{2n}\bigr),
  \end{split} \]
  where in the last step we used \(\norm{G(\ii T)}\le T^{-1}=n^{-100}\) and~\eqref{G local law bounds}. Since this bound is uniform in \(z\) we may bound the remaining integral by \( n\norm{\Delta f^{(j)}}_1 \), proving~\eqref{fixed alpha bounds}. 
  \subsubsection*{Case \texorpdfstring{\(l\ge 2\)}{l>=2}} For the case \(l\ge 2\) there are many assignments of \(\gamma_i\)'s to consider, e.g.
  \[\begin{split} 
    &\braket{G\Delta^{ab}G\Delta^{ab}G} = \frac{1}{n} \sum_c G_{ca} G_{ba} G_{bc},\quad \braket{G\Delta^{ab}G\Delta^{ba}G} = \frac{1}{n} \sum_c G_{ca} G_{bb} G_{ac}, \\ 
    &\braket{G\Delta^{ab}G\Delta^{ba}G\Delta^{ab}G} = \frac{1}{n} \sum_c G_{ca} G_{bb} G_{aa} G_{bc},\\ 
    &\braket{G\Delta^{ab}G\Delta^{ba}G\Delta^{ba}G} = \frac{1}{n} \sum_c G_{ca} G_{bb} G_{ab} G_{ac} 
  \end{split}\]
  but all are of the form that there are two \(G\)-factors carrying the independent summation index \(c\). In the case that \(a\equiv b+n\pmod{2n}\) we simply bound all remaining \(G\)-factors by \(1\) using~\eqref{G local law bounds} and use a simple Cauchy-Schwarz inequality to obtain 
  \begin{equation}\label{a b+n case}
    \abs{\partial_\gamma^l I_3^{(j)}} \lesssim \int_\C \abs{\Delta f_{z_j}^{(j)}(z)} \frac{1}{n}\int_{\eta_0}^T \sum_c \Bigl(\abs{G_{cb}}^2+\abs{G_{ca}}^2 \Bigr)\diff\eta \diff z. 
  \end{equation}
  Now it follows from the Ward-identity
  \begin{equation}\label{eq Ward}
    GG^\ast = G^\ast G = \frac{\Im G}{\eta}
  \end{equation}
  and the very crude bound \( \abs{G_{aa}}\lesssim 1 \) from~\eqref{G local law bounds} and \(\abs{\wh m}\lesssim 1\), that
  \[ \begin{split}
    \int_{\eta_0}^T \sum_c \Bigl(\abs{G_{cb}}^2+\abs{G_{ca}}^2 \Bigr)\diff\eta &= \int_{\eta_0}^T \frac{\abs{(\Im G)_{aa}}+\abs{(\Im G)_{bb}}}{\eta}\diff\eta \lesssim \int_{\eta_0}^T \frac{1}{\eta}\diff\eta\lesssim \log n. 
  \end{split}  \]
  By estimating the remaining \(z\)-integral in~\eqref{a b+n case} by \( n\norm[\big]{\Delta f^{(j)}}\) the claimed bound in~\eqref{fixed alpha bounds} for \(a=b+n\pmod{2n}\) follows. 
  
  In the case \(a\not\equiv b+n\pmod{2n}\) we can use~\eqref{G local law bounds} to gain a factor of \(\psi\) for some \(G_{ab}\) or \(G_{bb}-\wh m\) in all assignments except for the one in which all but two \(G\)-factors are diagonal, and those \(G_{aa},G_{bb}\)-factors are replaced by \(\wh m\). 
  For example, we would expand 
  \[
  G_{ca}G_{bb}G_{aa}G_{bc}=\wh m^2 G_{ca}G_{bc} + \wh m G_{ca}G_{bc} \landauO{\psi} + G_{ca}G_{bc} \landauO{\psi^2},
  \]
  where in all but the first term we gained at least a factor of \(\psi\). Using Cauchy-Schwarz as before we thus have the bound
  \begin{equation} \label{p2 I3}
    \begin{split}
      &\abs{\partial_\gamma^l I_3^{(j)}} \lesssim \int_\C \frac{\abs[\big]{\Delta f_{z_j}^{(j)}(z)}}{n} \biggl(\int_{\eta_0}^T \psi \sum_c \Bigl(\abs{G_{cb}}^2+\abs{G_{ca}}^2 \Bigr)\diff\eta \\&\qquad\qquad+\abs*{\int_{\eta_0}^T (\wh m)^{l-1} (G^2)_{aa}\diff\eta}+\abs*{\int_{\eta_0}^T (\wh m)^{l-1} (G^2)_{ab}\diff\eta} \biggr)\diff z,
    \end{split}
  \end{equation}
  where, strictly speaking, the second and third terms are only present for even, or respectively odd, \(l\). 
  For the first term in~\eqref{p2 I3} we again proceed by applying the Ward identity~\eqref{eq Ward}, and~\eqref{G local law bounds} to obtain the bound
  \[ \begin{split}
    \int_{\eta_0}^T \psi \sum_c \Bigl(\abs{G_{cb}}^2+\abs{G_{ca}}^2 \Bigr)\diff\eta &= \int_{\eta_0}^T \psi \frac{\abs{(\Im G)_{aa}}+\abs{(\Im G)_{bb}}}{\eta}\diff\eta \\
    & \lesssim \int_{\eta_0}^T \frac{\psi(\psi+\eta^{1/3})}{\eta}\diff\eta \lesssim \frac{\log n}{(n\eta_0)^2}.  
  \end{split}  \]
  For the second and third terms in~\eqref{p2 I3} we use \(\ii G^2=G'\), where prime denotes \(\partial_\eta\), and integration by parts, \(\abs{\wh m'}\lesssim \eta^{-2/3}\) from~\eqref{M Mp bounds}, and~\eqref{G local law bounds} to obtain the bounds
  \[\begin{split}
    \abs*{\int_{\eta_0}^T (\wh m)^{l-1} (G^2)_{aa}\diff\eta} &\lesssim \abs*{\int_{\eta_0}^T \wh m'(\wh m)^{l-2} G_{aa}\diff\eta} \\
    &\qquad +\abs{(\wh m(\ii\eta_0))^{l-1}  G(\ii\eta_0)_{aa}} + \abs{(\wh m(\ii T))^{l-1}  G(\ii T)_{aa}} \\
    &\lesssim  \abs*{\int_{\eta_0}^T \wh m' (\wh m)^{l-1}\diff \eta}  +\int_{\eta_0}^T \abs{\wh m'}\psi\diff\eta + \frac{1}{n^{1/4} (n\eta_0) }\\
    &\lesssim \frac{\log n}{n^{1/4} (n\eta_0) }
  \end{split} \]
  and 
  \[\begin{split}
    \abs*{\int_{\eta_0}^T (\wh m)^{l-1} (G^2)_{ab}\diff\eta} &\lesssim  \abs*{\int_{\eta_0}^T \wh m' (\wh m)^{l-2} G_{ab}\diff\eta} \\
    &\qquad + \abs{(\wh m(\ii\eta_0))^{l-1} G(\ii\eta_0)_{ab}}  + \abs{(\wh m(\ii T))^{l-1} G(\ii T)_{ab}} \\
    &\lesssim \int_{\eta_0}^T \abs{\wh m'}\psi\diff\eta  +  \frac{1}{n^{1/4} (n\eta_0) } \lesssim \frac{\log n}{n^{1/4} (n\eta_0) }.
  \end{split} \]
  In the explicit deterministic term we performed an integration and estimated 
  \[ \abs*{\int_{\eta_0}^T \wh m' (\wh m)^{l-1}\diff \eta} \lesssim \abs{\wh m(\ii\eta_0)}^l+\abs{\wh m(\ii T)}^l \lesssim n^{-l/4}+n^{-100}\le n^{-1/2}.\]
  The claim~\eqref{fixed alpha bounds} for \(l\ge 2\) and \(a\not\equiv b+n\pmod{2n}\) now follows from estimating the remaining \(z\)-integral in~\eqref{p2 I3} by \(n\norm[\big]{\Delta f^{(j)}}_1\). 
\end{proof}

\begin{proof}[Proof of Lemma~\ref{prop:impnewflow}]
  By~\eqref{eq:defbigflow} and Ito's Lemma it follows that 
  \begin{equation}
    \label{eq:itoimp}
    \E \frac{\diff Z_t}{\diff t}=\E \left[-\frac{1}{2}\sum_\alpha w_\alpha(t) \partial_\alpha Z_t+\frac{1}{2}\sum_{\alpha,\beta}\kappa_t(\alpha,\beta) \partial_\alpha\partial_\beta Z_t\right],
  \end{equation}
  where we recall the definition of \(\kappa_t\) in~\eqref{kappa t def}. In fact, the point-wise estimate from Lemma~\ref{lemma fixed alpha} gives a sufficiently strong bound for most terms in the cumulant expansion, the few remaining terms will be computed more carefully.
  
  In the cumulant expansion~\eqref{gen cum exp} of~\eqref{eq:itoimp} the second order terms cancel exactly and we now separately estimate the third-, fourth- and higher order terms. 
  \subsubsection*{Order three terms} 
  For the third order, when computing \(\partial_\alpha\partial_{\beta_1}\partial_{\beta_2} Z_t\) through the Leibniz rule we have to consider all possible assignments of derivatives \(\partial_\alpha,\partial_{\beta_1},\partial_{\beta_2}\) to the factors \smash{\(I_3^{(1)},\dots,I_3^{(k)}\)}. Since the particular functions \(f^{(j)}\) and complex parameters \(z_j\) play no role in the argument, there is no loss in generality in considering only the assignments
  \begin{equation}\label{3 assignments} 
    \begin{split}
      &\Bigl(\partial_{\alpha,\beta_1,\beta_2} I_3^{(1)}\Bigr)\prod_{j>1} I_3^{(j)},\;\; \Bigl(\partial_{\alpha,\beta_1} I_3^{(1)}\Bigr)\Bigl(\partial_{\beta_2} I_3^{(2)}\Bigr)\prod_{j>2} I_3^{(j)},\\ 
      &\Bigl(\partial_\alpha I_3^{(1)}\Bigr)\Bigl(\partial_{\beta_1} I_3^{(2)}\Bigr) \Bigl(\partial_{\beta_2} I_3^{(3)}\Bigr) \prod_{j>3} I_3^{(j)}
    \end{split}
  \end{equation}
  for the second and third term of which we obtain a bound of  
  \[ 
  \begin{split}
    & n^{\xi-3/2}e^{-3t/2} \biggl(\sum_{a\equiv b+n}  \prod_j \norm[\big]{\Delta f^{(j)}}_1 + \sum_{a\not\equiv b+n}  \prod_j \norm[\big]{\Delta f^{(j)}}_1 \frac{1}{(n\eta_0)^3} \biggr)\\
    &\qquad\lesssim \frac{n^\xi e^{-3t/2}}{n^{5/2}\eta_0^3}\prod_j \norm[\big]{\Delta f^{(j)}}_1
  \end{split} \]
  using Lemma~\ref{lemma fixed alpha} and the cumulant scaling~\eqref{eq:cumulant}. Note that the condition \(a\neq b\) in the lemma is ensured by the fact that for \(a=b\) the cumulants \(\kappa_t(\alpha,\beta_1,\dots)\) vanish. 
  
  The first term in~\eqref{3 assignments} requires an additional argument. We write out all possible index allocations and claim that ultimately we obtain the same bound, as for the other two terms in~\eqref{3 assignments}, i.e.
  \begin{equation}\label{J3}
    \begin{split}
      \abs{\sum_{\alpha\beta_1\beta_2} \kappa_t(\alpha,\beta_1,\beta_2) \partial_\alpha\partial_{\beta_1}\partial_{\beta_2} I_3^{(1)}} &\lesssim \frac{e^{-3t/2}}{n^{3/2}}\int_\C \frac{\abs[\big]{\Delta f_{z_1}^{(1)}}}{n} J_3 \diff z \\
      &\lesssim \frac{n^\xi e^{-3t/2}}{n^{5/2}\eta_0^3} \norm[\big]{\Delta f^{(1)}}_1
    \end{split}
  \end{equation}
  where 
  \begin{equation}\label{J3 crit}
    \begin{split}
      J_3 &:= \abs*{\int_{\eta_0}^T  \sum_{ab} (G^2)_{ab} G_{ab} G_{ab}\diff\eta }+\abs*{\int_{\eta_0}^T  \sum_{ab} (G^2)_{aa} G_{bb} G_{ab}\diff\eta }\\
      &\qquad+\abs*{\int_{\eta_0}^T  \sum_{ab} (G^2)_{ab} G_{aa} G_{bb}\diff\eta }.
    \end{split}
  \end{equation}   
  \begin{proof}[Proof of~\eqref{J3}]   
    Compared to the previous bound in Lemma~\ref{lemma fixed alpha} we now exploit the \(a,b\) summation via the isotropic structure of the bound in the local law~\eqref{local law ext}. We have the simple bounds
    \begin{equation}\label{eq iso G^2}
      \begin{split}
        \frac{\abs{\braket{\vx,\Im G\vx}}}{\norm{\vx}^2} &\lesssim \abs{\wh m}+n^\xi \psi \lesssim n\eta\psi^2,\\
        \abs{\braket{\vx, G^2 \vy}}&\le \frac{1}{\eta}\sqrt{\braket{\vx,\Im G\vx}\braket{\vy,\Im G\vy}}\lesssim n^\xi\norm{\vx}\norm{\vy} n\psi^2
      \end{split}
    \end{equation}
    as a consequence of the Ward identity~\eqref{eq Ward} and using~\eqref{eq local law} and~\eqref{m scaling}. 
    For the first term in~\eqref{J3 crit} we can thus use~\eqref{eq iso G^2} and~\eqref{eq Ward} to obtain
    \[\begin{split}  
      \abs*{\int_{\eta_0}^T  \sum_{ab} (G^2)_{ab} G_{ab} G_{ab}\diff\eta } & \lesssim n^\xi \int_{\eta_0}^T n \psi^2 \sum_{ab}\abs{G_{ab}}^2 \diff\eta \\
      & \lesssim n^\xi \int_{\eta_0}^T n \psi^2 \sum_a\frac{(\Im G)_{aa}}{\eta} \diff\eta\\
      & \lesssim n^\xi \int_{\eta_0}^T n^3 \psi^4\diff\eta \lesssim \frac{n^\xi}{n\eta_0^3}.
    \end{split}\] 
    For the second term in~\eqref{J3 crit} we split \(G_{bb}=\wh m+ \landauO{\psi}\) and bound it by 
    \[ \begin{split} 
      &\abs*{\int_{\eta_0}^T  \sum_{ab} (G^2)_{aa} G_{bb} G_{ab}\diff\eta } \\
      &\quad \lesssim n^\xi \int_{\eta_0}^T \psi \sum_{ab} \abs{ (G^2)_{aa} G_{ab}}\diff\eta  + \abs*{\int_{\eta_0}^T  \wh m \sum_{a} (G^2)_{aa} \braket{e_a,G\bm 1_{s(a)}}\diff\eta }\\ 
      &\quad\lesssim n^\xi \int_{\eta_0}^T n^{3/2} \psi^2 \biggl(\psi\sum_b \sqrt{\frac{(\Im G)_{bb}}{\eta}}+ \sqrt{\frac{ \braket{\bm 1_+,\Im G\bm1_+}+\braket{\bm 1_-,\Im G\bm1_-} }{\eta}}\biggr)\diff\eta  \\
      &\quad \lesssim n^\xi \int_{\eta_0}^T \Bigl(n^{3} \psi^4+n^{5/2}\psi^3 \Bigr)\diff\eta\lesssim \frac{n^\xi}{n\eta_0^3}
    \end{split}\]
    where \(e_a\) denotes the \(a\)-th standard basis vector, 
    \begin{equation}\label{bm1pm}
      \bm1_+ := (1,\ldots,1,0,\ldots,0), \quad \bm1_- := (0,\ldots,0,1,\ldots,1)
    \end{equation}
    are vectors of \(n\) ones and zeros, respectively, of norm \(\norm{\bm1_\pm}=\sqrt{n}\) and \(s(a):=-\) for \(a\le n\), and \(s(a):=+\) for \(a>n\). Here in the second step we used a Cauchy-Schwarz inequality for the \(a\)-summation in both integrals after estimating the \(G^2\)-terms using~\eqref{eq iso G^2}. Finally, for the third term in~\eqref{J3 crit} we split both \(G_{aa}=\wh m+\landauO{\psi}\) and \(G_{bb}=\wh m+\landauO{\psi}\) to estimate  
    \[ \begin{split}
      & \abs*{\int_{\eta_0}^T  \sum_{ab} (G^2)_{ab} G_{aa} G_{bb}\diff\eta } \\
      &\quad\lesssim n^\xi \int_{\eta_0}^T n^3 \psi^4\diff\eta + \sum_{a} \int_{\eta_0}^T  \abs{ \wh m  \braket{e_a, G^2 \bm 1_{s(a)}}\psi } \diff\eta  +\int_{\eta_0}^T \abs{\wh m^2 \braket{\bm 1_+, G^2\bm 1_-}}\diff\eta \\
      &\quad\lesssim \frac{n^\xi}{n\eta_0^3} +n^\xi \int_{\eta_0}^T  n^{5/2}\psi^3 \diff\eta +n^\xi \int_{\eta_0}^T \frac{n^2 \psi^2}{1+\eta^2}\diff\eta  \lesssim \frac{n^\xi}{n\eta_0^3},
    \end{split}\]
    using~\eqref{eq iso G^2}. In the last integral we used that \(\abs{\wh m}\lesssim (1+\eta)^{-1}\) to ensure the integrability in the large \(\eta\)-regime. Inserting these estimates on~\eqref{J3 crit} into~\eqref{J3} and estimating the remaining integral by \(n\norm[\big]{\Delta f^{(1)}}_1\) completes the proof of~\eqref{J3}.
  \end{proof}
  
  \subsubsection*{Order four terms}
  For the fourth-order Leibniz rule we have to consider the assignments
  \[ 
  \begin{split} 
    &\Bigl(\partial_{\alpha,\beta_1,\beta_2,\beta_3} I_3^{(1)}\Bigr)\prod_{j>1} I_3^{(j)},\;\; \Bigl(\partial_{\alpha,\beta_1,\beta_2} I_3^{(1)}\Bigr)\Bigl(\partial_{\beta_3} I_3^{(2)}\Bigr)\prod_{j>2} I_3^{(j)},\\ & \Bigl(\partial_{\alpha,\beta_1} I_3^{(1)}\Bigr)\Bigl(\partial_{\beta_2,\beta_3} I_3^{(2)}\Bigr) \prod_{j>2} I_3^{(j)}, \;\; \Bigl(\partial_{\alpha,\beta_1} I_3^{(1)}\Bigr)\Bigl(\partial_{\beta_2} I_3^{(2)}\Bigr)\Bigl(\partial_{\beta_3} I_3^{(3)}\Bigr) \prod_{j>3} I_3^{(j)},\\
    &\Bigl(\partial_{\alpha,\beta_1} I_3^{(1)}\Bigr)\Bigl(\partial_{\beta_2} I_3^{(2)}\Bigr)\Bigl(\partial_{\beta_2} I_3^{(3)}\Bigr)\Bigl(\partial_{\beta_3} I_3^{(4)}\Bigr) \prod_{j>4} I_3^{(j)},
  \end{split}
  \]
  for all of which we obtain a bound of 
  \[ \frac{n^{\xi} e^{-2t}}{n^2\eta_0^2} \prod_j \norm[\big]{\Delta f^{(j)}}_1,\]
  again using Lemma~\ref{lemma fixed alpha} and~\eqref{eq:cumulant}. 
  \subsubsection*{Higher order terms}
  For terms order at least \(5\), there is no need to additionally gain from any of the factors of \(I_3\) and we simply bound all those, and their derivatives, by \(n^\xi\) using Lemma~\ref{lemma fixed alpha}. This results in a bound of \(n^{\xi-(l-4)/2} e^{-l t/2}  \prod_j \norm[\big]{\Delta f^{(j)}}_1 \) for the terms of order \(l\). 
  
  By combining the estimates on the terms of order three, four and higher order derivatives, and integrating in \(t\) we obtain the bound~\eqref{eq:fingcr}. 
  This completes the proof of Lemma~\ref{prop:impnewflow}.
\end{proof} 

\appendix
\section{Extension of the local law}\label{app:aux}
\begin{proof}[Proof of Proposition~\ref{prop:local law}]
  The statement follows directly from~\cite[Theorem 5.2]{1907.13631} if \(\eta\ge\eta_0:= n^{-3/4+\epsilon}\). For smaller \(\eta_1\), using \(\partial_\eta G(\ii\eta)=\ii G^2(\ii\eta)\), we write 
  \begin{equation}\label{local law ext}
    \begin{split}
      \braket{\vx,[G(\ii\eta_1)-M(\ii\eta_1)]\vy} &= \braket{\vx, [G(\ii\eta_0)-M(\ii\eta_0)] \vy} \\
      &\quad+\ii \int_{\eta_0}^{\eta_1} \braket{\vx, [G^2(\ii\eta)-M'(\ii\eta)] \vy }\diff\eta
    \end{split} 
  \end{equation}
  and estimate the first term using the local law by \( n^{-1/4+\xi} \). For the second term we bound 
  \[
  \begin{split}
    \abs{\braket{\vx, G^2 \vy}} &\le \sqrt{ \braket{\vx,G^\ast G \vx}\braket{\vy,G^\ast G \vy} } = \frac{1}{\eta} \sqrt{\braket{\vx,\Im G \vx}\braket{\vy,\Im G \vy}}, \\ 
    \abs{\braket{\vx,M' \vy}}&\lesssim \norm{\vx}\norm{\vy}\frac{1}{\eta^{2/3}}
  \end{split}\]
  from \(\norm{M'}\lesssim (\Im\wh m)^{-2}\) and~\eqref{m scaling},
  and use monotonicity of \(\eta\mapsto \eta \braket{\vx,\Im G(\ii\eta)\vx}\) in the form 
  \[\Im \braket{\vx, G(\ii\eta)\vx} \le \frac{\eta_0}{\eta} \braket{\vx, \Im G(\ii\eta_0)\vx}  \prec \norm{\vx}^2 \Bigl(\frac{\eta_0^{4/3}}{\eta} + \frac{\eta_0^{2/3}}{\eta n^{1/2}} \Bigr) \lesssim\norm{\vx}^2 \frac{n^{4\epsilon/3}}{n\eta}.\]
  After integration we thus obtain a bound of \( \norm{\vx}\norm{\vy}n^{4\epsilon/3}/(n\eta_1)\) which proves the first bound in~\eqref{eq local law}. The second, averaged, bound in~\eqref{eq local law} follows directly from the first one since below the scale \(\eta\le n^{-3/4}\) there is no additional gain from the averaging, as compared to the isotropic bound. 
  
  In order to conclude the local law simultaneously in all \(z,\eta\) we use a standard \emph{grid argument}. To do so, we choose a regular grid of \(z\)'s and \(\eta\)'s at a distance of, say, \(n^{-3}\) and use Lipschitz continuity (with Lipschitz constant \(n^2\)) of \((\eta,z)\mapsto G^z(\ii\eta)\) and a union bound over the exceptional events at each grid point. 
\end{proof}

\printbibliography{}
\end{document}